\providecommand{\U}[1]{\protect \rule{.1in}{.1in}}
\newtheorem{theorem}{Theorem}[section]
\newtheorem{corollary}[theorem]{Corollary}
\newtheorem{definition}[theorem]{Definition}
\newtheorem{example}[theorem]{Example}
\newtheorem{lemma}[theorem]{Lemma}
\newtheorem{proposition}[theorem]{Proposition}
\newtheorem{remark}[theorem]{Remark}
\newenvironment{proof}[1][Proof]{\noindent \textbf{#1.} }{\  $\Box$}
\numberwithin{equation}{section}
\begin{document}

\title{\textbf{Invariant Sublinear Expectations}}
\author{Yongsheng Song\thanks{RCSDS, Academy of Mathematics and Systems Science, Chinese Academy of Sciences, Beijing 100190, China, and School of Mathematical Sciences, University of Chinese Academy of Sciences, Beijing 100049, China.E-mails: yssong@amss.ac.cn (Y. Song).}  }

\date{\today}
\maketitle

\begin{abstract}

We first give a decomposition for a $T$-invariant sublinear expectation $\mathbb{E}=\sup_{P\in\Theta}\mathrm{E}_P$, and show that each component $\mathbb{E}^{(d)}=\sup_{P\in\Theta^{(d)}}\mathrm{E}_P$ of the decomposition has a finite period $p_d\in\mathbb{N}$, i.e., \[\mathbb{E}^{(d)}\left[f-f\circ T^{p_d}\right]=0, \quad f\in\mathcal{H}.\] 
Then we prove that a continuous invariant sublinear expectation that is strongly ergodic has a finite period $p_{\mathbb{E}}$, and each component $\Theta^{(d)}$ of its periodic decomposition is the convex hull of a finite set of $T^{p_d}$-ergodic probabilities. 

As an application of the characterization, we prove an ergodicity result which shows that the limit of the $p_{\mathbb{E}}$-step time means achieves the upper expectation.
  
\end{abstract}

\textbf{Key words}: invariant sublinear expectation; periodic decomposition; strong ergodicity.

\textbf{MSC-classification}: 28A12; 28D05; 37A30.

\section{Introduction}

The notions of invariant and ergodic capacities were initiated by \textsl{Cerreia-Vioglio, Maccheroni, and Marinacci} \cite{ergodictheorem}, in which they gave the first Birkhoff's ergodic theorem for continuous invariant capacities:

Let $(\Omega,\mathcal{F}, T)$ be a measurable system, and let $\mathbb{V}=\sup_{P\in\Theta}P$ be a continuous upper probability.  If  $\mathbb{V}$ is ergodic, i.e., $\mathbb{V}(A)\in \{0, 1\}$ for $T$-invariant set $A$, then for any $P\in\Theta$, one gets
\begin{eqnarray}\label{ergodicresult1}
    \begin{split}
        -\mathrm{C}_{\mathbb{V}}[-\xi^{*}]\leq\lim_{n\to\infty}\frac{1}{n}\sum_{k=0}^{n-1}\xi(T^{k}\omega)\leq \mathrm{C}_{\mathbb{V}}[\xi^{*}], \ \ P\mbox{-a.s.},
    \end{split}
\end{eqnarray}
where $\xi^{*}(\omega)=\limsup\limits_{n\to\infty}\frac{1}{n}\sum_{k=0}^{n-1}\xi(T^{k}\omega)$ and $\mathrm{C}_{\mathbb{V}}$ represents the Choquet integral w.r.t $\mathbb{V}.$

We refer the readers to \textsl{Feng and Zhao} \cite{FZ21} and \textsl{Feng, Wu, and Zhao} \cite {FWZ20} for ergodicity results of capacities and sublinear expectations under a different definition.

\textsl{Sheng and Song} \cite{Sheng-Song24} provided a characterization of the continuous ergodic capacity $\mathbb{V}$. Specifically, let $\Theta$ be the set of probabilities on $\Omega$ dominated by $\mathbb{V}$,
 and let $\Theta^{(1)}$ (resp. $\Theta^{(1)}_{\mathfrak{e}}$) be the subset of $T$-invariant (resp. ergodic) probabilities in $\Theta$. Then, the cardinality of $\Theta^{(1)}_{\mathfrak{e}}$ is finite, and $\Theta^{(1)}=\textrm{co}\ \Theta^{(1)}_{\mathfrak{e}}$, the convex hull of $\Theta^{(1)}_{\mathfrak{e}}$.

As a by-product, they improve the ergodic theorem (\ref{ergodicresult1}). Let $\mathbb{E}^{(1)}=\sup_{P\in\Theta^{(1)}}\mathrm{E}_{P}.$ Then, for any $P\in\Theta$ and any bounded random variable $\xi$, they show that
$$
    -\mathbb{E}^{(1)}[-\xi]\leq\lim_{n\to\infty}\frac{1}{n}\sum_{k=0}^{n-1}\xi(T^{k}\omega)\leq \mathbb{E}^{(1)}[\xi], \ \ P\mbox{-a.s.}
$$
 The limit is bounded by the upper expectation $\mathbb{E}^{(1)}[\xi]$, instead of the Choquet integral $\mathrm{C}_{\mathbb{V}}[\xi^{*}].$

Note that $\mathbb{E}^{(1)}[\xi]<\mathbb{E}[\xi]$ generally,  so a natural question is how to achieve the upper and lower expectations via the time means. The main purpose of this paper is  trying to answer this question by giving further characterizations for the structure of the set $\Theta$.

 Let $(\Omega,\mathcal{F}, T)$ be a measurable system, and let $\mathcal {H}$ be a vector space of $\mathcal{F}$-measurable functions on $\Omega$ such that $1\in \mathcal{H}, f\circ T, |f|\in \mathcal{H} \ \mathrm{if} \ f\in\mathcal{H}.$ A sublinear expectation $\mathbb{E}=\sup_{P\in \Theta}\mathrm{E}_P$ on $(\Omega, \mathcal{H})$ is said to be $T$-invariant if \[\mathbb{E}[f\circ T]=\mathbb{E}[f],  \ \emph{for} \ f\in\mathcal{H}.\] Here  $\Theta$ is a subset of $\mathcal{M}(\Omega)$, the collection of probabilities on $(\Omega, \mathcal{F})$.

In Section 3, we give a decomposition for the invariant sublinear expectation (ISE), and prove that each component $\mathbb{E}^{(d)}=\sup_{P\in\Theta^{(d)}}\mathrm{E}_P$ of the decomposition is periodic, i.e., there exists $p\in\mathbb{N}$ such that 
\[\mathbb{E}^{(d)}[f-f\circ T^p]=0, \quad f\in\mathcal{H}.\]
Write $p_d$ for the smallest such integer, which is called the period of the ISE $\mathbb{E}^{(d)}$. The main results in this section are Proposition \ref {perioD} and Theorem \ref {Thm-chara-PeriodicD}. 

We say an ISE $\mathbb{E}$ has a periodic decomposition if 
\begin {equation}\label {PD}\mathbb{E}[f]=\sup_{d\in\mathbb{N}}\mathbb{E}^{(d)}[f],  \quad f\in\mathcal{H}.
\end {equation}
A typical example with this property is the sublinear expectation $\mathbb{E}$ defined on $(\mathbb{R}^{\infty}, C_b(\mathbb{R}^{\infty}))$ endowed with the product topology, such that the sequence of random variables 
$\xi_n(\omega)=\omega_n$, $n\in\mathbb{N}$, are i.i.d. For this case,

\begin {itemize}
\item [$\cdot$] $\mathbb{E}$ has a periodic decomposition (\ref {PD}) with respect the shift $\theta$;

\item [$\cdot$] for $P\in\mathrm{ext} \ \Theta^{(d)}$, $d\in\mathbb{N}$, $P$ is $\theta^d$-ergodic;

\item [$\cdot$] if $\mathbb{E}$ is not linear, the period $p_d$ of $\mathbb{E}^{(d)}$ is euqal to $d$.
\end {itemize}

For the (strong) law of large numbers of i.i.d. random variables under sublinear expectations, see \cite {P08}, \cite{Chen16, CHW19}, \cite {Song23, Zhang23} and references therein.

In Section 4, we introduce the notion of  strong ergodicity for continuous ISEs defined on $(\Omega, \mathcal{B}_b(\Omega))$. Here, $\mathcal{B}_b(\Omega)$ is the set of bounded measurable functions defined on $(\Omega, \mathcal{F})$.  For a continuous ISE that is strongly ergodic, we show that if $\mathbb{E}$ has a periodic decomposition (\ref {PD}), then

\begin {itemize}
\item [$\cdot$] $\mathbb{E}$ has a finite period $p_{\mathbb{E}}$;

\item [$\cdot$] for $P\in\mathrm{ext} \ \Theta^{(d)}$, $d\in\mathbb{N}$, $P$ is $\theta^{p_d}$-ergodic;

\item [$\cdot$] for each $f\in\mathcal{B}_b\left(\Omega\right)$ there exists a probablity $P_f\in\Theta$, such that
\begin {align*} 
 \lim_{n\to\infty}\frac{1}{n}\sum_{k=0}^{n-1}f\circ T^{kp_{\mathbb{E}}}=\mathbb{E}[f], \quad P_f\mbox{-a.s.}
\end {align*}
\end {itemize}

We provide several examples in Section 5. Besides the example of i.i.d. sequence of random variables listed above, we show that for a continuous ISE defined on $\left(\Omega, \mathcal{B}_b\left(\Omega\right)\right)$, if $\Omega$ is countable, then $\mathbb{E}$ has a periodic decomposition. We also give an example of ISEs that does not have a periodic decomposition.

\section{Notations and Preliminaries}

 Let $(\Omega,\mathcal{F}, T)$ be a measurable system, and let $\mathcal {H}$ be a vector space of $\mathcal{F}$-measurable functions on $\Omega$ such that $1\in \mathcal{H}, f\circ T, |f|\in \mathcal{H} \ \mathrm{if} \ f\in\mathcal{H}.$  Denote by $\mathcal{I}_d$, $d\in\mathbb{N}$ the $T^d$-invariant $\sigma$-algebra, and write $\mathcal{I}$ for $\mathcal{I}_1$.

A sublinear expectation $\mathbb{E}=\sup_{P\in \Theta}\mathrm{E}_P$ on $(\Omega, \mathcal{H})$ is said to be $T$-invariant if \[\mathbb{E}[f\circ T]=\mathbb{E}[f],  \ \emph{for} \ f\in\mathcal{H}.\] Here  $\Theta$ is a family of probabilities on $(\Omega, \mathcal{F})$. Denote by $\mathcal{M} (\Omega)$ the collection of probabilities on $(\Omega,\mathcal{F}).$  In this paper, we assume that \[\Theta=\left\{ P\in \mathcal {M} (\Omega) \ | \ \mathrm{E}_P[f]\le \mathbb{E}[f], \ f\in\mathcal{H}\right\}.\]

We consider two types of regularity for sublinear expectations: continuous and regular. 

Let $\mathcal{B}_b(\Omega)$ be the collection of bounded $\mathcal{F}$-measurable functions on $\Omega$. A sublinear expectation $\mathbb{E}=\sup_{P\in \Theta}\mathrm{E}_P$ on $(\Omega, \mathcal{B}_b(\Omega))$ is said to be continuous if the associated upper probability $\mathbb{V}(A)=\sup_{P\in\Theta}P(A)$, $A\in\mathcal{F},$ is continuous from above, namely, $\lim\limits_{n\to\infty}\mathbb{V}(A_{n})=0$ for $A_n\in \mathcal{F}$ with $A_{n}\downarrow \emptyset$.

\begin{remark}\label{continuity of capacity} Let $A_{n}\in\mathcal{F}, n\ge 1$, be a sequence of disjoint sets. If $\mathbb{V}(A_{n})\ge\varepsilon$ for some $\varepsilon>0$ and all $n\ge 1$, then $\mathbb{V}$ cannot be continuous. In fact, $\mathop{\cup}\limits_{k\geq n}A_{k}\downarrow \varnothing,$ but $\mathbb{V}(\mathop{\cup}\limits_{k\geq n}A_{k})\geq\mathbb{V}(A_{n})\ge\varepsilon.$
\end{remark}

\begin{definition}
    Let $\mathbb{V}$ be a continuous upper probability on $(\Omega,\mathcal{F}).$  $\mathbb{V}$ is said to be $T$-invariant if \[\mathbb{V}(T^{-1}A)=\mathbb{V}(A), \quad A\in\mathcal{F}.\]
We say $\mathbb{V}$ is $T$-ergodic, if $\mathbb{V}$ is $T$-invariant and $\mathbb{V}(A)=0,$ or 1, for $A\in\mathcal{I}.$
\end{definition}

\begin {theorem}(\cite{Sheng-Song24})\label{main result-SS} Assume that $\mathbb{V}(A)=\sup_{P\in\Theta}P(A)$ is $T$-ergodic. Let $\Theta^{(1)}$ (resp. $\Theta^{(1)}_{\mathfrak{e}}$) be the subset of $T$-invariant (resp. ergodic) probabilities in $\Theta$. Then, the cardinality of $\Theta^{(1)}_{\mathfrak{e}}$ is finite, and $\Theta^{(1)}=\mathrm{co}\ \Theta^{(1)}_{\mathfrak{e}}$, the convex hull of $\Theta^{(1)}_{\mathfrak{e}}$.

\end {theorem}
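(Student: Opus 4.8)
The plan is to prove the statement in three parts: the finiteness of $\Theta^{(1)}_{\mathfrak{e}}$, the inclusion $\mathrm{co}\,\Theta^{(1)}_{\mathfrak{e}}\subseteq\Theta^{(1)}$, and the inclusion $\Theta^{(1)}\subseteq\mathrm{co}\,\Theta^{(1)}_{\mathfrak{e}}$. For finiteness I would use that two distinct $T$-ergodic probabilities are mutually singular along a $T$-invariant set: given $P\neq Q$ in $\Theta^{(1)}_{\mathfrak{e}}$, pick $C\in\mathcal{F}$ with $P(C)\neq Q(C)$ and set $D=\{\omega:\lim_{n}\frac{1}{n}\sum_{k=0}^{n-1}\mathbbm{1}_{C}(T^{k}\omega)=P(C)\}$; by Birkhoff's theorem $D\in\mathcal{I}$, $P(D)=1$, $Q(D)=0$. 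If $\Theta^{(1)}_{\mathfrak{e}}$ were infinite, take distinct $P_{1},P_{2},\dots$ in it and combine the pairwise separating invariant sets (intersecting the relevant ones and their complements) to obtain pairwise disjoint $A_{i}\in\mathcal{I}$ with $P_{i}(A_{i})=1$, hence $\mathbb{V}(A_{i})\geq P_{i}(A_{i})=1$; disjoint sets of capacity $1$ contradict the continuity of $\mathbb{V}$ (Remark \ref{continuity of capacity}). So $\Theta^{(1)}_{\mathfrak{e}}=\{P_{1},\dots,P_{N}\}$ is finite. The inclusion $\mathrm{co}\,\Theta^{(1)}_{\mathfrak{e}}\subseteq\Theta^{(1)}$ is immediate: $\Theta$ and the set of $T$-invariant probabilities are convex, and each $P_{i}\in\Theta^{(1)}$.

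For $\Theta^{(1)}\subseteq\mathrm{co}\,\Theta^{(1)}_{\mathfrak{e}}$, I would fix $P\in\Theta^{(1)}$ and pass to its ergodic decomposition over $\mathcal{I}$ (available under the regularity of the framework, e.g.\ a Polish underlying space), $P=\int_{\Omega}P^{\omega}\,dP(\omega)$, where $P^{\omega}$ is $T$-ergodic for $P$-a.e.\ $\omega$ and $P^{\omega}(B)=\mathrm{E}_{P}[\mathbbm{1}_{B}\mid\mathcal{I}](\omega)$. Everything then reduces to the claim that $P^{\omega}\leq\mathbb{V}$ for $P$-a.e.\ $\omega$: granting it, $P^{\omega}\in\Theta^{(1)}_{\mathfrak{e}}=\{P_{1},\dots,P_{N}\}$ $P$-a.s., so $\omega\mapsto P^{\omega}$ takes finitely many values, $\Omega$ splits modulo a $P$-null set into the $\mathcal{I}$-sets $\Omega_{j}=\{\omega:P^{\omega}=P_{j}\}$, and $P=\sum_{j}P(\Omega_{j})\,P_{j}\in\mathrm{co}\,\Theta^{(1)}_{\mathfrak{e}}$.

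To prove that claim, fix $B\in\mathcal{F}$, put $v=\mathbb{V}(B)$, and for rational $\varepsilon>0$ let $S_{\varepsilon}=\{\omega:\limsup_{n}\frac{1}{n}\sum_{k=0}^{n-1}\mathbbm{1}_{B}(T^{k}\omega)>v+\varepsilon\}$. Since the $\limsup$ of the ergodic averages is $T$-invariant, $S_{\varepsilon}\in\mathcal{I}$, so ergodicity of $\mathbb{V}$ forces $\mathbb{V}(S_{\varepsilon})\in\{0,1\}$. If $\mathbb{V}(S_{\varepsilon})=0$, then $P(S_{\varepsilon})=0$, and Birkhoff's theorem for $P$ gives $P^{\omega}(B)\leq v+\varepsilon$ $P$-a.s.; letting $\varepsilon\downarrow0$ yields $P^{\omega}(B)\leq\mathbb{V}(B)$ $P$-a.s. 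So the real work is to exclude $\mathbb{V}(S_{\varepsilon})=1$. Here I would use that continuity of $\mathbb{V}$ from above makes $\Theta$ uniformly countably additive and weakly closed, hence weakly compact in the space of finite signed measures (Dunford--Pettis); so the supremum $\sup_{R\in\Theta}R(S_{\varepsilon})=\mathbb{V}(S_{\varepsilon})=1$ is attained, and because $\mathbb{V}$ is $T$-invariant and $S_{\varepsilon}\in\mathcal{I}$, the set $\{R\in\Theta:R(S_{\varepsilon})=1\}$ is a nonempty weakly compact convex set invariant under the commuting affine weakly continuous maps $(T^{k})_{*}$. By the Markov--Kakutani fixed point theorem it contains a $T$-invariant $R^{\star}$ with $R^{\star}(S_{\varepsilon})=1$; applying Birkhoff to $R^{\star}$, on $S_{\varepsilon}$ the ergodic averages of $\mathbbm{1}_{B}$ converge to a limit $>v+\varepsilon$, so that $R^{\star}(B)=R^{\star}(B\cap S_{\varepsilon})=\int_{S_{\varepsilon}}\mathrm{E}_{R^{\star}}[\mathbbm{1}_{B}\mid\mathcal{I}]\,dR^{\star}\geq v+\varepsilon$, contradicting $R^{\star}(B)\leq\mathbb{V}(B)=v$.

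Running this over $B$ in a countable algebra generating $\mathcal{F}$ gives $P^{\omega}\leq\mathbb{V}$ on that algebra for $P$-a.e.\ $\omega$, and promoting it to all of $\mathcal{F}$ would use inner/outer regularity of $\mathbb{V}$ and of the measures $P^{\omega}$ together with the continuity from below of $\mathbb{V}=\sup_{R\in\Theta}R$. I expect the main obstacle to be exactly this claim, and within it the case $\mathbb{V}(S_{\varepsilon})=1$, which is where the three hypotheses are forced to interact: continuity (weak compactness of $\Theta$, so dominating maximizers exist), ergodicity (invariant events are $\mathbb{V}$-trivial), and $T$-invariance (closure of $\Theta$ under $(T^{k})_{*}$), combined through the averaging/fixed-point step; the passage from a generating algebra to $\mathcal{F}$ is the secondary point where regularity of the underlying space enters. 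A cleaner-sounding alternative would be to show that every extreme point of the weakly compact convex set $\Theta^{(1)}$ is ergodic and then quote Krein--Milman plus finiteness of $\Theta^{(1)}_{\mathfrak{e}}$ (a convex hull of finitely many points is closed); but proving ``extreme $\Rightarrow$ ergodic'' collides with the same underlying fact, namely that conditioning an element of $\Theta^{(1)}$ on an invariant set keeps it inside $\Theta$, so it is not really a shortcut.
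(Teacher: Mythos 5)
This theorem is not proved in the present paper: it is imported verbatim from \cite{Sheng-Song24}, so there is no in-text proof to compare against; I will judge your argument against the mechanism used there and in \cite{ergodictheorem}. Your proposal is essentially sound, and its core step is the right one: the finiteness argument (mutually singular invariant supports of distinct ergodic measures versus Remark \ref{continuity of capacity}) is exactly the standard one, and the heart of the matter is indeed your claim that $\mathrm{E}_P[\mathbbm{1}_B\mid\mathcal{I}]\le\mathbb{V}(B)$ $P$-a.s., proved by the dichotomy on the invariant set $S_\varepsilon$ and by excluding $\mathbb{V}(S_\varepsilon)=1$ through producing a $T$-invariant element of $\Theta$ charging $S_\varepsilon$. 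Two remarks on how you produce that invariant measure. First, you do not need the supremum $\sup_{R\in\Theta}R(S_\varepsilon)$ to be attained, nor Markov--Kakutani: take $R_n\in\Theta$ with $R_n(S_\varepsilon)>1-1/n$ and invariantize by Ces\`aro averaging plus a Banach--Mazur limit as in Lemma \ref{invariant-Theta} of this paper (this is the route of \cite{ergodictheorem} and \cite{Sheng-Song24}); since $S_\varepsilon\in\mathcal{I}$, the invariantization preserves $R_n(S_\varepsilon)$, and $R_n'(B)\ge(v+\varepsilon)(1-1/n)>v$ for large $n$ already gives the contradiction. Your weak-compactness/fixed-point route works but carries unnecessary functional-analytic load.

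The one genuine caveat is your reliance on the ergodic decomposition $P=\int P^{\omega}\,dP$, which requires regular conditional probabilities and hence a standard Borel (Polish) hypothesis not present in the statement; you flag this, but it can and should be avoided. For a \emph{fixed} $B$ your $S_\varepsilon$ argument already yields $\mathrm{E}_P[\mathbbm{1}_B\mid\mathcal{I}]\le\mathbb{V}(B)$ $P$-a.s.\ without any regular version, and integrating over an invariant set $A$ gives $P(B\cap A)\le P(A)\,\mathbb{V}(B)$, i.e.\ $P(\cdot\mid A)\in\Theta^{(1)}$ for every $A\in\mathcal{I}$ with $P(A)>0$. This is precisely the ``conditioning stays in $\Theta$'' fact you mention at the end, and once it is in hand the alternative you dismiss as ``not really a shortcut'' is actually the cleaner finish on a general measurable space: non-ergodic elements of $\Theta^{(1)}$ are non-extreme, so the (finitely many) extreme points of the weakly compact convex set $\Theta^{(1)}$ are ergodic with disjoint invariant supports $E_1,\dots,E_N$; the conditioning fact shows $P(\Omega\setminus\bigcup_i E_i)=0$ for every $P\in\Theta^{(1)}$ (otherwise conditioning there and taking an extreme point of the resulting compact convex face produces a new ergodic element), and $P(\cdot\mid E_i)=P_i$, giving $P=\sum_i P(E_i)P_i$. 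Your monotone-class upgrade from a generating algebra is fine where needed (continuity of $\mathbb{V}$ from above and below suffices; no inner/outer regularity is required), but in the reorganized argument it is not needed at all.
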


Let $\Omega$ be a Polish space, and let $\mathcal{F}$ be the Borel $\sigma$-algebra on $\Omega$. A sublinear expectation $\mathbb{E}=\sup_{P\in \Theta}\mathrm{E}_P$ on $(\Omega, C_b(\Omega))$ is said to be regular if it is continuous from above along $C_b(\Omega)$, namely, $\lim\limits_{n\to\infty}\mathbb{E}[\phi_{n}]=0$ for $\phi_n\in C_b(\Omega)$ with $\phi_{n}\downarrow 0$.

\begin{remark}
  A sublinear expectation $\mathbb{E}=\sup_{P\in \Theta}\mathrm{E}_P$ on $(\Omega, C_b(\Omega))$ is regular if and only if $\Theta$ is weakly compact.
\end{remark}

\section{The periodic decomposition of ISEs}

In this section, we give a periodic decomposition of ISEs. First, we give the definition of periodicity of an ISE.

\begin {definition} We say a $T$-invariant sublinear expectation $\mathbb{E}$ on $(\Omega, \mathcal{H})$ is periodic  if there exists $p\in\mathbb{N}$ such that for any $f\in \mathcal{H}$
\[\mathbb{E}\left[f\circ T^{p}-f\right]=0.\] We call the smallest such integer the period of $\mathbb{E}$, denoted by $p_{\mathbb{E}}$.
\end {definition}

The following proposition provides a periodic decomposition for ISEs.

\begin {proposition} \label {perioD} For a $T$-invariant sublinear  expectation $\mathbb{E}$ on $(\Omega, \mathcal{H})$,  for any $d\in\mathbb{N}$, 
\begin {align}\mathbb{E}^{(d)}\left[f\right]:=\lim_n \frac{1}{n}\mathbb{E}\left[\sum_{k=0}^{n-1}f\circ T^{kd}\right],
  \ f\in \mathcal{H},
\end {align} is a well-defined $T$-invariant sublinear expectation on $(\Omega, \mathcal{H})$.  

1) For $d \in\mathbb{N}$, $\mathbb{E}^{(d)}\left[f+g\circ T^d-g\right]=\mathbb{E}^{(d)}\left[f\right]$, $f, g\in\mathcal{H}$. Particularly, $\mathbb{E}^{(d)}$ is periodic.
  
2) Let $\Theta^{(d)}$ be the set of probabilities on $\Omega$ dominated by  $\mathbb{E}^{(d)}$, i.e., \[\mathrm{E}_P\left[f\right]\le \mathbb{E}^{(d)}\left[f\right], \ \textmd{for} \ f\in \mathcal{H}.\]  Then, $P\in \Theta^{(d)}$ if and only if $P$ is dominated by $\mathbb{E}$ and $T^d$-invariant.

3) For $d, l\in\mathbb{N}$, $d\mid l$, we have $\mathbb{E}^{(d)}[f]\le \mathbb{E}^{(l)}[f]$, $f\in \mathcal{H}$, or equivalently, $\Theta^{(d)}\subset\Theta^{(l)}$.

\end {proposition}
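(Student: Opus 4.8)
The first task is to show $\mathbb{E}^{(d)}$ is well-defined, i.e., the limit $\lim_n \frac1n\mathbb{E}[\sum_{k=0}^{n-1}f\circ T^{kd}]$ exists. The standard tool here is Fekete's subadditivity lemma: setting $a_n = \mathbb{E}[\sum_{k=0}^{n-1}f\circ T^{kd}]$, one checks $a_{n+m}\le a_n + a_m$ using $\mathbb{E}[f\circ T^{kd}]=\mathbb{E}[f]$ ($T$-invariance, iterated) and subadditivity of $\mathbb{E}$: indeed $a_{n+m}=\mathbb{E}[\sum_{k=0}^{n-1}f\circ T^{kd} + \sum_{k=n}^{n+m-1}f\circ T^{kd}] \le a_n + \mathbb{E}[(\sum_{k=0}^{m-1}f\circ T^{kd})\circ T^{nd}] = a_n + a_m$. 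So $a_n/n$ converges (to $\inf_n a_n/n$, which is finite since $|a_n/n|\le\mathbb{E}[|f|]$). That $\mathbb{E}^{(d)}$ is a sublinear expectation (monotone, constant-preserving, positively homogeneous, subadditive) is then inherited termwise in the limit, and $T$-invariance of $\mathbb{E}^{(d)}$ follows from $T$-invariance of $\mathbb{E}$ applied to the finite sums.

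For part 1), the key identity is a telescoping trick. Note $\sum_{k=0}^{n-1}(f+g\circ T^d - g)\circ T^{kd} = \sum_{k=0}^{n-1}f\circ T^{kd} + g\circ T^{nd} - g$. Applying $\mathbb{E}$ and using subadditivity in both directions together with $\mathbb{E}[\pm(g\circ T^{nd}-g)]\le \mathbb{E}[|g\circ T^{nd}|]+\mathbb{E}[|g|] \le 2\mathbb{E}[|g|]$ (a bound uniform in $n$), dividing by $n$ and letting $n\to\infty$ kills the boundary terms, giving $\mathbb{E}^{(d)}[f+g\circ T^d-g]=\mathbb{E}^{(d)}[f]$. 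Taking $f=g$ and using $\mathbb{E}^{(d)}[0]=0$ yields $\mathbb{E}^{(d)}[g\circ T^d - g]=0$; combined with subadditivity this also gives $\mathbb{E}^{(d)}[g\circ T^d]=\mathbb{E}^{(d)}[g]$ in the other direction, hence $\mathbb{E}^{(d)}$ is periodic with period dividing $d$.

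For part 2), one direction is immediate: if $P\in\Theta^{(d)}$ then $\mathrm{E}_P[f]\le\mathbb{E}^{(d)}[f]\le\limsup_n\frac1n\sum_{k=0}^{n-1}\mathbb{E}[f\circ T^{kd}]=\mathbb{E}[f]$ (using $a_n/n=\inf\le$ average $\le \mathbb{E}[f]$ by invariance... more carefully, $\mathbb{E}^{(d)}[f]\le \mathbb{E}[f]$ since $a_n/n\le a_1/1=\mathbb{E}[f]$), so $P$ is dominated by $\mathbb{E}$; and $T^d$-invariance of $P$ follows from $\mathrm{E}_P[f\circ T^d - f]\le \mathbb{E}^{(d)}[f\circ T^d-f]=0$ and the same with $-f$, giving $\mathrm{E}_P[f\circ T^d]=\mathrm{E}_P[f]$ for all $f\in\mathcal{H}$, which forces $P\circ T^{-d}=P$ on $\sigma(\mathcal{H})$, and one argues this extends to $\mathcal{F}$ under the standing assumption (or restricts attention accordingly). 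Conversely, if $P$ is dominated by $\mathbb{E}$ and $T^d$-invariant, then $\mathrm{E}_P[\sum_{k=0}^{n-1}f\circ T^{kd}] = n\,\mathrm{E}_P[f]$, while $\mathbb{E}[\sum_{k=0}^{n-1}f\circ T^{kd}]\ge \mathrm{E}_P[\sum_{k=0}^{n-1}f\circ T^{kd}]$; dividing by $n$ and passing to the limit gives $\mathbb{E}^{(d)}[f]\ge \mathrm{E}_P[f]$, i.e., $P\in\Theta^{(d)}$. Part 3) is then purely formal: if $d\mid l$, write $l=md$; grouping the sum $\sum_{k=0}^{nm-1}f\circ T^{kd}$ into $n$ blocks of length $m$ and using subadditivity shows $\frac{1}{nm}\mathbb{E}[\sum_{k=0}^{nm-1}f\circ T^{kd}] \le \frac1n\mathbb{E}[\sum_{j=0}^{n-1}(\sum_{i=0}^{m-1}f\circ T^{id})\circ T^{jl}] \cdot\frac1m$... cleaner is to argue at the level of $\Theta$: since every $T^l$-invariant measure dominated by $\mathbb{E}$ is also... no — rather, by part 2), $\Theta^{(d)}$ consists of $\mathbb{E}$-dominated $T^d$-invariant measures, and any $T^d$-invariant measure is $T^{md}=T^l$-invariant, so $\Theta^{(d)}\subset\Theta^{(l)}$, whence $\mathbb{E}^{(d)}=\sup_{\Theta^{(d)}}\mathrm{E}_P\le\sup_{\Theta^{(l)}}\mathrm{E}_P=\mathbb{E}^{(l)}$ — provided we first note that $\mathbb{E}^{(d)}=\sup_{P\in\Theta^{(d)}}\mathrm{E}_P$, which follows from part 2) together with the general fact that $\mathbb{E}^{(d)}$, being a sublinear expectation, is the supremum of the linear functionals it dominates.

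The main obstacle I anticipate is the measure-theoretic bookkeeping in part 2): deducing genuine $T^d$-invariance of a probability $P$ on the full $\sigma$-algebra $\mathcal{F}$ from the functional identity $\mathrm{E}_P[f\circ T^d]=\mathrm{E}_P[f]$ for $f\in\mathcal{H}$ alone requires either that $\mathcal{H}$ be rich enough to generate $\mathcal{F}$ and be closed under the relevant limits, or a careful monotone-class argument; the paper's standing assumptions on $\mathcal{H}$ (containing constants, closed under $f\mapsto f\circ T$ and $f\mapsto|f|$) need to be leveraged precisely here, and this is the one place where the argument is not a one-line inequality.
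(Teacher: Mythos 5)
Your proposal is correct and follows essentially the same route as the paper: Fekete subadditivity for well-definedness, the telescoping identity for 1), the two one-line inequality arguments for 2), and deducing 3) from 2) via the inclusion of invariant-measure classes. The only point worth noting is that your (rightly flagged) reliance in 3) on the representation $\mathbb{E}^{(d)}=\sup_{P\in\Theta^{(d)}}\mathrm{E}_P$ is also implicit in the paper's ``or equivalently''; your abandoned direct argument --- splitting $\sum_{k=0}^{nm-1}f\circ T^{kd}$ into $m$ interleaved sums over $T^{l}$-orbits and using subadditivity and $T$-invariance --- would prove $\mathbb{E}^{(d)}\le\mathbb{E}^{(l)}$ without it.
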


\begin {proof}  For $f\in \mathcal{H}$, let $a_n:=\mathbb{E}\left[\sum_{k=0}^{n-1}f\circ T^{kd}\right]$. Then $a_{m+n}\le a_n+a_m$ since $\mathbb{E}$ is a $T$-invariant sublinear expectation. So $\mathbb{E}^{(d)}$ is well-defined. To prove the invariance of $\mathbb{E}^{(d)}$, by the definition, for $f\in \mathcal{H}$,
\begin{eqnarray*}
\mathbb{E}^{(d)}\left[f\circ T\right]&=&\lim_n \frac{1}{n}\mathbb{E}\left[\sum_{k=0}^{n-1}\left(f\circ T\right)\circ T^{kd}\right]\\
&=&\lim_n \frac{1}{n}\mathbb{E}\left[\left(\sum_{k=0}^{n-1}f\circ T^{kd}\right)\circ T\right]\\
&=&\lim_n \frac{1}{n}\mathbb{E}\left[\left(\sum_{k=0}^{n-1}f\circ T^{kd}\right)\right]=\mathbb{E}^{(d)}[f].
\end {eqnarray*}

1)  By the definition of $\mathbb{E}^{(d)}$, we have
\begin {align*}
\mathbb{E}^{(d)}\left[f+g-g\circ T^d\right]&=\lim_n\mathbb{E}\left[\frac{1}{n}\sum_{k=0}^{n-1}\left(f+g-g\circ T^d\right)\circ T^{kd}\right]\\
&=\lim_n\mathbb{E}\left[\frac{1}{n}\sum_{k=0}^{n-1}f\circ T^{kd}+\frac{1}{n}(g-g\circ T^{nd})\right]\\
&=\mathbb{E}^{(d)}\left[f\right].
\end {align*}

2) For a probability $P$ dominated by $\mathbb{E}$, if it is $T^d$-invariant, then, for any $f\in \mathcal{H}$, 
\[\mathrm{E}_P\left[f\right]=\lim_n\frac{1}{n}\textrm{E}_P\left[\sum_{k=0}^{n-1}f\circ T^{kd}\right]\le \lim_n\frac{1}{n}\mathbb{E}\left[\sum_{k=0}^{n-1}f\circ T^{kd}\right]=\mathbb{E}^{(d)}\left[f\right].\]
That is, $P$ is domonated by $\mathbb{E}^{(d)}$. 

To prove the converse statement, for a  probability $P$ dominated by $\mathbb{E}^{(d)}$ and $f\in \mathcal{H}$, we have
\[\mathrm{E}_P\left[\pm\left(f-f\circ T^d\right)\right]\le\mathbb{E}^{(d)}\left[\pm\left(f-f\circ T^d\right)\right]=0,\] which means $\mathrm{E}_P\left[f\right]=\mathrm{E}_P\left[f\circ T^d\right].$

3) follows immediately from 2).
\end {proof}

\begin {lemma} \label {relation-Theta}  If $\Theta^{(l)}\subset \Theta^{(d)}$ for some $l, d\in\mathbb{N}$, then we have
\begin {align}
\Theta^{(l)}=\Theta^{\left(\mathfrak{gcd}\left(l,d\right)\right)},
\end {align}where $\mathfrak{gcd}(l,d)$ is the greatest common divisor of $l$ and $d$.
\end {lemma}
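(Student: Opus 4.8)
The plan is to convert the hypothesis $\Theta^{(l)}\subset\Theta^{(d)}$ into a statement about the invariance of individual probabilities via part 2) of Proposition \ref{perioD}, and then to prove the purely combinatorial fact that a probability which is simultaneously $T^{l}$-invariant and $T^{d}$-invariant is automatically $T^{g}$-invariant, where $g:=\mathfrak{gcd}(l,d)$. The inclusion $\Theta^{(g)}\subset\Theta^{(l)}$ is immediate from part 3) of Proposition \ref{perioD} because $g\mid l$, so the whole content of the lemma is the reverse inclusion $\Theta^{(l)}\subset\Theta^{(g)}$.

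Take $P\in\Theta^{(l)}$. By hypothesis $P\in\Theta^{(d)}$ as well, so part 2) of Proposition \ref{perioD} says that $P$ is dominated by $\mathbb{E}$ and that $\mathrm{E}_{P}[f\circ T^{l}]=\mathrm{E}_{P}[f]=\mathrm{E}_{P}[f\circ T^{d}]$ for every $f\in\mathcal{H}$; here we use that $f\circ T^{k}\in\mathcal{H}$ for all $k\geq 0$, since $\mathcal{H}$ is closed under $f\mapsto f\circ T$. Introduce $S:=\{n\in\mathbb{N}\mid \mathrm{E}_{P}[f\circ T^{n}]=\mathrm{E}_{P}[f]\text{ for all }f\in\mathcal{H}\}$. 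It suffices to show $g\in S$, for then $P$ is $T^{g}$-invariant and dominated by $\mathbb{E}$, hence $P\in\Theta^{(g)}$ by part 2) of Proposition \ref{perioD} again.

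To show $g\in S$, first observe that $S$ is closed under addition: if $m,n\in S$ then, applying the defining identity for $n$ to $h:=f\circ T^{m}\in\mathcal{H}$, one gets $\mathrm{E}_{P}[f\circ T^{m+n}]=\mathrm{E}_{P}[h\circ T^{n}]=\mathrm{E}_{P}[h]=\mathrm{E}_{P}[f]$. Since $l,d\in S$, it follows that $S$ contains every non-negative integer combination $al+bd$. Writing $l=gl'$ and $d=gd'$ with $\mathfrak{gcd}(l',d')=1$, the Frobenius (``coin problem'') fact that two coprime positive integers generate all sufficiently large positive integers shows that $mg\in S$ for all $m\geq m_{0}$, for some threshold $m_{0}$; in particular $m_{0}g,(m_{0}+1)g\in S$. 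Applying the identity for $m_{0}g$ to $h:=f\circ T^{g}\in\mathcal{H}$ gives $\mathrm{E}_{P}[f]=\mathrm{E}_{P}[f\circ T^{(m_{0}+1)g}]=\mathrm{E}_{P}[h\circ T^{m_{0}g}]=\mathrm{E}_{P}[h]=\mathrm{E}_{P}[f\circ T^{g}]$, i.e. $g\in S$, which finishes the proof.

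The only genuine obstacle is this last step: since $T$ need not be invertible, one cannot invoke Bézout's identity with negative coefficients to pass directly from $l,d\in S$ to $g\in S$, so the argument must route through the numerical-semigroup observation (all sufficiently large multiples of $g$ are non-negative combinations of $l$ and $d$) followed by a single ``subtraction'' performed on two consecutive such multiples. Everything else is a direct bookkeeping application of Proposition \ref{perioD}.
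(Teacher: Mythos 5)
Your proof is correct, but it takes a different route from the paper's. You fix a single $P\in\Theta^{(l)}\subset\Theta^{(d)}$, pass via part 2) of Proposition \ref{perioD} to the additive semigroup $S=\{n\mid \mathrm{E}_P[f\circ T^n]=\mathrm{E}_P[f],\ f\in\mathcal{H}\}$, use the Frobenius (numerical semigroup) fact to place two consecutive multiples of $g=\mathfrak{gcd}(l,d)$ in $S$, and then perform one ``subtraction'' to get $g\in S$. The paper instead runs the Euclidean algorithm directly at the level of the sets $\Theta^{(m)}$: from $\Theta^{(l)}\subset\Theta^{(d)}$ it deduces $\Theta^{(l)}\subset\Theta^{(l \bmod d)}$ (or with the roles swapped), and descends through a sequence of indices divisible by $g$ until it reaches $g$ itself; this requires some case bookkeeping (whether the current index divides $l$ or $d$) that your argument avoids entirely. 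The two proofs rest on the same underlying observation — the subtraction step $\mathrm{E}_P[f\circ T^{m}]=\mathrm{E}_P[(f\circ T^{m-n})\circ T^{n}]$ works for non-invertible $T$ as long as $m-n>0$ — so your closing remark slightly overstates matters: you do not \emph{have} to route through the numerical-semigroup observation, since iterating that single subtraction yields the full Euclidean algorithm on $l$ and $d$, which is precisely the paper's proof. What your version buys is a cleaner, case-free argument localized to one probability at a time, at the cost of importing the coin-problem fact; one small point worth making explicit is that your final appeal to part 2) (to conclude $P\in\Theta^{(g)}$ from $g\in S$) uses only the functional identity $\mathrm{E}_P[f\circ T^{kg}]=\mathrm{E}_P[f]$ for $f\in\mathcal{H}$, which is exactly what the paper's own proof of that direction of part 2) uses, so the identification of ``$T^g$-invariant'' with membership in $S$ is consistent with the paper's conventions.
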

\begin {proof} If $\mathfrak{gcd}(l,d)=d$ or $\mathfrak{gcd}(l,d)=l$, the conclusion follows from $3)$ in Proposition \ref {perioD}. So we only need to prove $\Theta^{(l)}\subset\Theta^{(\mathfrak{gcd}(l,d))}$ for the  $\mathfrak{gcd}(l,d)<d\land l$ case.

Step 1.  There exists $ m_1<d\land l$ such that $\mathfrak{gcd}(l,d)\mid m_1$ and $\Theta^{(l)}\subset\Theta^{(m_1)}$. 

If  $d<l$, assume $l \bmod d=d_1$. Then $d>d_1\ge 1$ and $\mathfrak{gcd}(l,d) \mid d_1$. For $P\in\Theta^{(l)}$, we have
\[P=P\circ T^{-l}=P\circ T^{-d_1},\]
which implies that $\Theta^{(l)}\subset\Theta^{(d_1)}$. 

If  $d>l$, assume $d \bmod l=l_1$. Then $l>l_1\ge 1$ and $\mathfrak{gcd}(l,d) \mid l_1$. For $P\in\Theta^{(l)}$, we have
\[P=P\circ T^{-d}=P\circ T^{-l_1},\]
which implies that $\Theta^{(l)}\subset\Theta^{(l_1)}$. 

Step 2. For $ m_1<d\land l$ such that $\mathfrak{gcd}(l,d)\mid m_1$ and $\Theta^{(l)}\subset\Theta^{(m_1)}$, there exists $m_1>m_2\ge 1$ such that $\mathfrak{gcd}(l,d) \mid m_2$ and $\Theta^{(l)}\subset\Theta^{(m_2)}$ if $m_1>\mathfrak{gcd}(l,d)$. 

In fact, if $m_1$ does not divide $l$, it follows from Step 1 with $d$ replaced by $m_1$ that there exists $m_1>m_2\ge 1$  such that $\mathfrak{gcd}\left(l, m_1\right)\mid m_2$, thus $\mathfrak{gcd}(l,d)\mid m_2$, and  $\Theta^{(l)}\subset\Theta^{(m_2)}$. 

If $m_1\mid l$, then $\Theta^{(m_1)}=\Theta^{(l)}\subset\Theta^{(d)}$ and $m_1$ does not divide $d$. It follows from Step 1 with $l$ replaced by $m_1$ that there exists $m_1>m_2\ge 1$ such that $\mathfrak{gcd}\left(d, m_1\right)\mid m_2$, thus $\mathfrak{gcd}(l,d)\mid m_2$, and $\Theta^{(l)}=\Theta^{(m_1)}\subset\Theta^{(m_2)}$. 

Step 3. By Step 1 and Step 2, we get $\Theta^{(l)}\subset\Theta^{(\mathfrak{gcd}(l, d))}$.
\end {proof}

\begin {corollary}\label {relation-periodicD} Let $\mathbb{E}$ be a $T$-invariant sublinear  expectation  on $(\Omega, \mathcal{H})$. Then, for any $l, d\in\mathbb{N}$, we have
\[\left(\mathbb{E}^{(d)}\right)^{(l)}=\mathbb{E}^{(\mathfrak{gcd}(l, d))},\] where $\mathfrak{gcd}(l, d)$ is the greatest common divider of $l$ and $d$.

\end {corollary}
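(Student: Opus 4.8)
The plan is to evaluate $(\mathbb{E}^{(d)})^{(l)}[f]$ by a direct computation and watch it collapse to $\mathbb{E}^{(\mathfrak{gcd}(l,d))}[f]$, using only Proposition \ref{perioD}: that $\mathbb{E}^{(d)}$ is again a $T$-invariant sublinear expectation (so $(\mathbb{E}^{(d)})^{(l)}$ makes sense), that by part 1) it annihilates $T^{d}$-coboundaries, and that all the defining limits exist, hence may be evaluated along convenient subsequences. Write $g=\mathfrak{gcd}(l,d)$ and $d=gd'$, $l=gl'$ with $\mathfrak{gcd}(d',l')=1$.

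First I would reduce $\mathbb{E}^{(d)}\big[\sum_{j=0}^{m-1}f\circ T^{jl}\big]$ modulo $T^{d}$-coboundaries. Writing $jl=q_{j}d+r_{j}$ with $r_{j}=jl\bmod d$, the function $f\circ T^{jl}-f\circ T^{r_{j}}$ telescopes into a finite sum of $T^{d}$-coboundaries $\phi\circ T^{d}-\phi$ with $\phi\in\mathcal{H}$, hence is itself of that form; summing over $j$ and applying part 1) of Proposition \ref{perioD} gives $\mathbb{E}^{(d)}\big[\sum_{j=0}^{m-1}f\circ T^{jl}\big]=\mathbb{E}^{(d)}\big[\sum_{j=0}^{m-1}f\circ T^{r_{j}}\big]$. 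Because $\mathfrak{gcd}(d',l')=1$, the sequence $(r_{j})_{j\ge0}$ is periodic with period $d'$ and, on each period, runs bijectively over $\{0,g,2g,\dots,(d'-1)g\}$; taking $m=d'M$ therefore gives $\sum_{j=0}^{d'M-1}f\circ T^{r_{j}}=M\sum_{i=0}^{d'-1}f\circ T^{ig}$, and positive homogeneity yields
\[
(\mathbb{E}^{(d)})^{(l)}[f]=\lim_{M\to\infty}\frac{M}{d'M}\,\mathbb{E}^{(d)}\Big[\sum_{i=0}^{d'-1}f\circ T^{ig}\Big]=\frac{g}{d}\,\mathbb{E}^{(d)}\Big[\sum_{i=0}^{d'-1}f\circ T^{ig}\Big].
\]

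Next I would compute $\mathbb{E}^{(d)}[h]$ for $h=\sum_{i=0}^{d'-1}f\circ T^{ig}$. Since $d=gd'$, the bijection $(i,k)\mapsto i+kd'$ from $\{0,\dots,d'-1\}\times\{0,\dots,n-1\}$ onto $\{0,\dots,nd'-1\}$ turns $\sum_{k=0}^{n-1}h\circ T^{kd}=\sum_{k=0}^{n-1}\sum_{i=0}^{d'-1}f\circ T^{g(i+kd')}$ into $\sum_{s=0}^{nd'-1}f\circ T^{gs}$, so $\mathbb{E}^{(d)}[h]=\lim_{n}\frac1n\mathbb{E}\big[\sum_{s=0}^{nd'-1}f\circ T^{gs}\big]=d'\,\mathbb{E}^{(g)}[f]$. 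Substituting into the previous display gives $(\mathbb{E}^{(d)})^{(l)}[f]=\frac{g}{d}\,d'\,\mathbb{E}^{(g)}[f]=\mathbb{E}^{(g)}[f]$, which is the assertion.

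I do not expect a genuine obstacle here; the effort is in bookkeeping. The points to get right are: (i) that the auxiliary functions stay in $\mathcal{H}$ and that a finite sum of $T^{d}$-coboundaries is again a $T^{d}$-coboundary, so that part 1) of Proposition \ref{perioD} legitimately applies; (ii) the two elementary facts about the index set (the period of $j\mapsto jl\bmod d$ equals $d'$, and $(i,k)\mapsto i+kd'$ is a bijection); and (iii) evaluating the Fekete-type limits along the subsequences $m=d'M$ and $n\mapsto nd'$, which is allowed precisely because Proposition \ref{perioD} already guarantees those limits exist. One may also see the statement conceptually: by part 2) of Proposition \ref{perioD} applied first to $\mathbb{E}^{(d)}$ and then to $\mathbb{E}$, a probability is dominated by $(\mathbb{E}^{(d)})^{(l)}$ iff it is dominated by $\mathbb{E}$ and is both $T^{d}$- and $T^{l}$-invariant, and since $\{n\ge 0:\,P\circ T^{-n}=P\}$ is a submonoid of $\mathbb{N}\cup\{0\}$ containing $d$ and $l$, a numerical-semigroup argument shows this is the same as $T^{g}$-invariance, i.e. as $P\in\Theta^{(g)}$; the direct computation above upgrades this coincidence of dominated families to equality of the sublinear expectations.
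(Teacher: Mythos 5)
Your computation is correct, but it takes a genuinely different route from the paper. The paper never touches the iterated Ces\`aro averages directly: it works entirely at the level of the dominated sets, using part 2) of Proposition \ref{perioD} to identify $\left(\Theta^{(d)}\right)^{(k)}$ with $\Theta^{(k)}$ for $k\mid d$, observing $\left(\Theta^{(d)}\right)^{(l)}\subset\Theta^{(d)}=\left(\Theta^{(d)}\right)^{(d)}$, and then invoking Lemma \ref{relation-Theta} (the Euclidean-algorithm argument on the sets $\Theta^{(k)}$) to conclude $\left(\Theta^{(d)}\right)^{(l)}=\Theta^{(\mathfrak{gcd}(l,d))}$ — essentially the ``conceptual'' argument you sketch in your last sentence. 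What your direct computation buys is that it proves equality of the functionals $\left(\mathbb{E}^{(d)}\right)^{(l)}[f]=\mathbb{E}^{(\mathfrak{gcd}(l,d))}[f]$ for every $f\in\mathcal{H}$ outright, whereas the paper's final step passes from equality of dominated sets to equality of the sublinear expectations, which implicitly uses that each $\mathbb{E}^{(k)}$ is the upper envelope of $\Theta^{(k)}$; your route sidesteps that representation issue and uses only part 1) of Proposition \ref{perioD}, positive homogeneity, and the existence of the Fekete limits (which legitimizes evaluating along the subsequences $m=d'M$ and $nd'$). What the paper's route buys is brevity and reusability: the set-level identity $\Theta^{(l)}=\Theta^{(\mathfrak{gcd}(l,d))}$ from Lemma \ref{relation-Theta} is exactly what is needed again later (in Lemma \ref{lemma-chara-period} and Theorem \ref{Thm-chara-PeriodicD}), so the corollary falls out with no new estimates. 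Your bookkeeping — the coboundary reduction $f\circ T^{jl}\mapsto f\circ T^{r_j}$, the periodicity of $j\mapsto jl\bmod d$ with period $d'$ covering $\{0,g,\dots,(d'-1)g\}$, and the re-indexing $(i,k)\mapsto i+kd'$ — all checks out.
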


\begin {proof}
By 2) in Proposition \ref {perioD}, we can easily get that $\left(\Theta^{(d)}\right)^{(k)}=\Theta^{(k)}$ for $k\mid d$. Noting that 
\[\left(\Theta^{(d)}\right)^{(l)}\subset \Theta^{(d)}=\left(\Theta^{(d)}\right)^{(d)},\] it follows from Lemma \ref {relation-Theta} that 
\[\left(\Theta^{(d)}\right)^{(l)}=\left(\Theta^{(d)}\right)^{(\mathfrak{gcd}(l, d))}=\Theta^{(\mathfrak{gcd}(l, d))},\] which is equivalent to
\[\left(\mathbb{E}^{(d)}\right)^{(l)}=\mathbb{E}^{(\mathfrak{gcd}(l, d))}.\] 
\end {proof}

\begin {lemma}\label {lemma-chara-period} Let $\mathbb{E}$ be a $T$-invariant sublinear expectation  on $(\Omega, \mathcal{H})$. For $d\in\mathbb{N}$, 
\begin{align}\label {chara-Period}
\mathbb{E}\left[f\circ T^{d}-f\right]=0, \ f\in\mathcal{H} \ \textrm{iff}  \ \mathbb{E}[f]=\mathbb{E}^{(d)}[f], \ f\in\mathcal{H}.
\end {align} In this case, $p_{\mathbb{E}}\mid d$.

\end {lemma}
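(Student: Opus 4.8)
The plan is to prove the two implications of (\ref{chara-Period}) separately and then read off the divisibility $p_{\mathbb{E}}\mid d$ from the forward direction. The common engine is the following remark: if $\mathbb{E}\left[f\circ T^{d}-f\right]=0$ for all $f\in\mathcal{H}$, then replacing $f$ by $-f$ also gives $\mathbb{E}\left[f-f\circ T^{d}\right]=0$, so every ``$T^{d}$-coboundary'' $h=g\circ T^{d}-g$ with $g\in\mathcal{H}$ satisfies $\mathbb{E}[h]=\mathbb{E}[-h]=0$. Since $\mathbb{E}$ is only subadditive, this is precisely enough to conclude $\mathbb{E}[A+h]=\mathbb{E}[A]$ for every $A\in\mathcal{H}$ (use $\mathbb{E}[A+h]\le\mathbb{E}[A]+\mathbb{E}[h]=\mathbb{E}[A]$ and $\mathbb{E}[A]\le\mathbb{E}[A+h]+\mathbb{E}[-h]=\mathbb{E}[A+h]$); in other words, adding a $T^{d}$-coboundary does not change the value of $\mathbb{E}$.

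For the forward implication I would then telescope. Writing $S_{n}=\sum_{k=0}^{n-1}f\circ T^{kd}$, one has
$$S_{n}-nf=\sum_{k=1}^{n-1}\bigl(f\circ T^{kd}-f\bigr)=\sum_{k=1}^{n-1}\sum_{j=0}^{k-1}\Bigl[(f\circ T^{jd})\circ T^{d}-(f\circ T^{jd})\Bigr],$$
which is a finite sum of $T^{d}$-coboundaries. Applying the remark once for each summand gives $\mathbb{E}[S_{n}]=\mathbb{E}[nf]=n\mathbb{E}[f]$, and dividing by $n$ and letting $n\to\infty$ yields $\mathbb{E}^{(d)}[f]=\mathbb{E}[f]$. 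Conversely, if $\mathbb{E}=\mathbb{E}^{(d)}$, then part 1) of Proposition \ref{perioD} (applied with the ``free'' function equal to $0$, so that $\mathbb{E}^{(d)}[g\circ T^{d}-g]=\mathbb{E}^{(d)}[0]=0$, or simply the statement there that $\mathbb{E}^{(d)}$ is periodic) gives $\mathbb{E}\left[g\circ T^{d}-g\right]=\mathbb{E}^{(d)}\left[g\circ T^{d}-g\right]=0$ for all $g\in\mathcal{H}$, which is the reverse implication.

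It remains to show $p_{\mathbb{E}}\mid d$. Once $\mathbb{E}\left[f\circ T^{d}-f\right]=0$ for all $f$, the integer $p:=p_{\mathbb{E}}$ is well defined and $p\le d$; write $d=qp+r$ with $0\le r<p$ (note $q\ge 1$ since $p\le d$). The same telescoping as above, now over $q$ blocks of length $p$, shows that the iterated coboundary $(f\circ T^{r})\circ T^{qp}-(f\circ T^{r})$ has $\mathbb{E}$-value zero on both signs. From
$$f\circ T^{r}-f=\bigl(f\circ T^{d}-f\bigr)-\Bigl[(f\circ T^{r})\circ T^{qp}-(f\circ T^{r})\Bigr]$$
and subadditivity we get $\mathbb{E}\left[f\circ T^{r}-f\right]=0$ for all $f\in\mathcal{H}$, and minimality of $p$ forces $r=0$, i.e.\ $p\mid d$. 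I do not expect a genuine obstacle here: the whole argument is bookkeeping of telescoping sums, and the only point needing attention is that sublinearity gives one-sided bounds only, so one must consistently invoke both $\mathbb{E}[h]=0$ and $\mathbb{E}[-h]=0$ to erase a coboundary from inside $\mathbb{E}$.
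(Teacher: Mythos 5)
Your proof is correct. For the equivalence (\ref{chara-Period}) itself you follow essentially the paper's route: the forward direction is the same telescoping of $S_n-nf$ into $T^d$-coboundaries combined with subadditivity applied to both $h$ and $-h$ (the paper phrases this as a two-sided inequality chain $\mathbb{E}[S_n]-n\mathbb{E}[f]\le 0$ and $\ge 0$ rather than erasing the coboundaries one at a time, but the content is identical), and the converse is read off from part 1) of Proposition \ref{perioD} exactly as in the paper. Where you genuinely diverge is the final claim $p_{\mathbb{E}}\mid d$: the paper deduces it by noting $\mathbb{E}=\mathbb{E}^{(d)}=\mathbb{E}^{(p_{\mathbb{E}})}$, hence $\Theta^{(d)}=\Theta^{(p_{\mathbb{E}})}$, invoking Lemma \ref{relation-Theta} to get $\Theta^{(p_{\mathbb{E}})}=\Theta^{(\mathfrak{gcd}(p_{\mathbb{E}},d))}$, and then using minimality of the period; you instead run the Euclidean division $d=qp_{\mathbb{E}}+r$ directly at the level of $\mathbb{E}$, writing $f\circ T^{r}-f$ as the difference of the given $d$-coboundary and a telescoped $p_{\mathbb{E}}$-coboundary to conclude $\mathbb{E}\left[\pm\left(f\circ T^{r}-f\right)\right]=0$, and then letting minimality force $r=0$. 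Your version is more elementary and self-contained, as it needs neither Lemma \ref{relation-Theta} nor the identification of $\Theta^{(d)}$ from Proposition \ref{perioD} 2), at the cost of one extra telescoping computation; the paper's version buys reuse of the gcd machinery that it needs again in Theorem \ref{Thm-chara-PeriodicD}. Both arguments are complete, and you correctly flag the one delicate point, namely that sublinearity only gives one-sided bounds so both signs of each coboundary must be controlled.
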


\begin {proof} The  sufficiency of (\ref {chara-Period}) follows from 1) in Proposition \ref {perioD}. To prove the necessity, assume that $\mathbb{E}$ is periodic with $\mathbb{E}[f\circ T^d-f]=0$, for any $f\in\mathcal{H}$. Then

\begin {align*}
\mathbb{E}\left[\sum_{k=0}^{n-1}f\circ T^{kd}\right]-n\mathbb{E}\left[f\right]\le\mathbb{E}\left[\sum_{k=0}^{n-1}\left(f\circ T^{kd}-f\right)\right]\le\sum_{k=0}^{n-1}\mathbb{E}\left[\left(f\circ T^{kd}-f\right)\right]=0.
\end {align*}
Similarly, we can prove that $\mathbb{E}\left[\sum_{k=0}^{n-1}f\circ T^{kd}\right]-n\mathbb{E}\left[f\right]\ge0$, and consequently we get
\[\mathbb{E}\left[\sum_{k=0}^{n-1}f\circ T^{kd}\right]-n\mathbb{E}\left[f\right]=0,\] which implies that 
\[\mathbb{E}[f]=\mathbb{E}^{(d)}[f].\] The proof to (\ref {chara-Period}) is completed.

By (\ref {chara-Period}), we have $\mathbb{E}=\mathbb{E}^{(p_{\mathbb{E}})}.$ So, $\Theta^{(d)}=\Theta^{(p_{\mathbb{E}})}$. We conclude from \ref {relation-Theta} that $\Theta^{(p_{\mathbb{E}})}=\Theta^{(\mathfrak{gcd}(p_{\mathbb{E}}, d))}$, i.e., $\mathbb{E}^{(p_{\mathbb{E}})}=\mathbb{E}^{(\mathfrak{gcd}(p_{\mathbb{E}}, d))}$. By (\ref {chara-Period}) and the definition of the period, we have $p_{\mathbb{E}}=\mathfrak{gcd}(p_{\mathbb{E}}, d)$, which means $p_{\mathbb{E}}\mid d$.
\end {proof}

\begin {theorem}\label {Thm-chara-PeriodicD} For a $T$-invariant sublinear  expectation $\mathbb{E}$ on $(\Omega, \mathcal{H})$, write $p_d$ for the period of $\mathbb{E}^{(d)}$. Then

1) $p_d \mid d$;

2) $p_d=\inf\left\{l\in\mathbb{N}\mid \mathbb{E}^{(l)}=\mathbb{E}^{(d)}\right\};$

3) $p_d \mid p_l$ if $d\mid l$.
\end {theorem}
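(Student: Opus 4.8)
The strategy is to reduce everything to the two tools already established: Corollary 3.5, which computes $(\mathbb{E}^{(d)})^{(l)}=\mathbb{E}^{(\mathfrak{gcd}(l,d))}$, and Lemma 3.6, which says that $\mathbb{E}^{(e)}$ being periodic with $\mathbb{E}^{(e)}[f\circ T^{p}-f]=0$ is equivalent to $\mathbb{E}^{(e)}=(\mathbb{E}^{(e)})^{(p)}$, with the additional information that then $p_{\mathbb{E}^{(e)}}\mid p$. I would prove the three items more or less independently, with item 2) doing most of the work and items 1) and 3) following quickly.

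For part 1), I note that $\mathbb{E}^{(d)}$ is itself periodic with period dividing $d$: indeed by part 1) of Proposition 3.4, $\mathbb{E}^{(d)}[f+g\circ T^d-g]=\mathbb{E}^{(d)}[f]$, so taking $g=f$ gives $\mathbb{E}^{(d)}[f\circ T^d-f]=0$ for all $f\in\mathcal{H}$, i.e. $d$ is a period of $\mathbb{E}^{(d)}$. Since $p_d$ is the smallest period, $p_d\le d$; to upgrade $\le$ to $\mid$, apply Lemma 3.6 to the ISE $\mathbb{E}^{(d)}$ in place of $\mathbb{E}$: its last sentence states precisely that $p_{\mathbb{E}^{(d)}}=p_d$ divides any integer which is a period, and $d$ is one. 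So $p_d\mid d$.

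For part 2), set $S:=\{l\in\mathbb{N}\mid \mathbb{E}^{(l)}=\mathbb{E}^{(d)}\}$. I must show $p_d=\inf S$, i.e. $p_d\in S$ and $p_d\le l$ for every $l\in S$. To see $p_d\in S$: $\mathbb{E}^{(d)}$ is periodic with period $p_d$, so $\mathbb{E}^{(d)}[f\circ T^{p_d}-f]=0$; now apply Lemma 3.6 directly to the ISE $\mathbb{E}^{(d)}$, which gives $\mathbb{E}^{(d)}=(\mathbb{E}^{(d)})^{(p_d)}$, and by Corollary 3.5 the right side is $\mathbb{E}^{(\mathfrak{gcd}(p_d,d))}=\mathbb{E}^{(p_d)}$ because $p_d\mid d$ by part 1). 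Hence $\mathbb{E}^{(p_d)}=\mathbb{E}^{(d)}$, so $p_d\in S$. Conversely, if $l\in S$ then $\mathbb{E}^{(l)}=\mathbb{E}^{(d)}$, and since $\mathbb{E}^{(l)}$ has period dividing $l$ (same argument as in part 1)), $\mathbb{E}^{(d)}[f\circ T^{l}-f]=0$; but $p_d$ is the \emph{smallest} period of $\mathbb{E}^{(d)}$, so $p_d\le l$. This gives $p_d=\min S=\inf S$.

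For part 3), suppose $d\mid l$. By part 3) of Proposition 3.4, $\Theta^{(d)}\subset\Theta^{(l)}$. Using part 2), $p_l\in S_l:=\{k\mid \mathbb{E}^{(k)}=\mathbb{E}^{(l)}\}$, i.e. $\mathbb{E}^{(p_l)}=\mathbb{E}^{(l)}$, equivalently $\Theta^{(p_l)}=\Theta^{(l)}$. The cleanest route is then to apply Corollary 3.5 to $\mathbb{E}^{(d)}$ with exponent $p_l$: $(\mathbb{E}^{(d)})^{(p_l)}=\mathbb{E}^{(\mathfrak{gcd}(p_l,d))}$. On the other hand, iterating the decomposition through $\mathbb{E}^{(l)}=\mathbb{E}^{(d\cdot(l/d))}=(\mathbb{E}^{(d)})^{(l/d)}$ — which holds since $\Theta^{(l)}=(\Theta^{(d)})^{(l/d)}$ by repeated use of part 2) of Proposition 3.4 — one sees that $\Theta^{(d)}$ plays the role of the ambient ISE and $\mathbb{E}^{(l)}$, $p_l$ its period; then applying part 1) of this theorem to $\mathbb{E}^{(d)}$ in place of $\mathbb{E}$ and $l/d$ in place of $d$, together with the characterization in part 2), yields that $p_l$ (the period of $(\mathbb{E}^{(d)})^{(l/d)}$) is divisible by the period $p_d$ of $\mathbb{E}^{(d)}$. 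I expect the main obstacle to be precisely this step 3): one must be careful about what "the period of $\mathbb{E}^{(d)}$ as an ISE on $(\Omega,\mathcal{H})$" means versus periods computed relative to the subfamily $\Theta^{(d)}$, and verify that Corollary 3.5 and Lemma 3.6 genuinely apply with $\mathbb{E}^{(d)}$ substituted for $\mathbb{E}$ (they do, since $\mathbb{E}^{(d)}$ is again a $T$-invariant sublinear expectation on $(\Omega,\mathcal{H})$, and its associated family of dominated probabilities is exactly $\Theta^{(d)}$). Once that substitution is justified, 3) is a two-line consequence: $p_d = p_{\mathbb{E}^{(d)}} \mid (\text{any period of }\mathbb{E}^{(l)}=(\mathbb{E}^{(d)})^{(l/d)})$, and $p_l$ is such a period.
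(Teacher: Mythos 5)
Your parts 1) and 2) are correct and follow essentially the same route as the paper: periodicity of $\mathbb{E}^{(d)}$ with period dividing $d$ comes from Proposition \ref{perioD}, the divisibility upgrade and the identification $p_d=\inf\{l:\left(\mathbb{E}^{(d)}\right)^{(l)}=\mathbb{E}^{(d)}\}$ come from Lemma \ref{lemma-chara-period} applied to $\mathbb{E}^{(d)}$, and Corollary \ref{relation-periodicD} converts this into $\inf\{l:\mathbb{E}^{(l)}=\mathbb{E}^{(d)}\}$. (Minor slip in 1): to extract $\mathbb{E}^{(d)}[f\circ T^d-f]=0$ from the identity $\mathbb{E}^{(d)}[f+g\circ T^d-g]=\mathbb{E}^{(d)}[f]$ you should take the first argument to be $0$ and $g=f$; taking $g=f$ with the first argument equal to $f$ only reproduces invariance.)

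Part 3) contains a genuine error: the identity $\mathbb{E}^{(l)}=\left(\mathbb{E}^{(d)}\right)^{(l/d)}$, equivalently $\Theta^{(l)}=\left(\Theta^{(d)}\right)^{(l/d)}$, is false, and it directly contradicts Corollary \ref{relation-periodicD}, which you cite in the same paragraph: that corollary gives $\left(\mathbb{E}^{(d)}\right)^{(l/d)}=\mathbb{E}^{(\mathfrak{gcd}(l/d,\,d))}$, and $\mathfrak{gcd}(l/d,d)\le d<l$ whenever $d<l$. The conceptual point is that $\left(\Theta^{(d)}\right)^{(k)}$ consists of probabilities dominated by $\mathbb{E}$ that are simultaneously $T^d$-invariant and $T^k$-invariant, so iterating the decomposition shrinks the set (it imposes more invariance), whereas $\Theta^{(l)}$ for $l=dk$ is a larger set than $\Theta^{(d)}$ (it imposes less invariance). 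Consequently $p_l$ is not the period of $\left(\mathbb{E}^{(d)}\right)^{(l/d)}$, and the chain of substitutions you build on this identity does not go through. The statement you actually need — that $p_d$ divides any period of $\mathbb{E}^{(l)}$ — is true, but must be reached differently. Either argue as the paper does: $\Theta^{(p_d)}=\Theta^{(d)}\subset\Theta^{(l)}=\Theta^{(p_l)}$ by your part 2), so Lemma \ref{relation-Theta} forces $\Theta^{(p_d)}=\Theta^{(\mathfrak{gcd}(p_d,p_l))}$, and minimality from part 2) gives $p_d=\mathfrak{gcd}(p_d,p_l)$. Or argue directly from domination: since $\mathbb{E}^{(d)}\le\mathbb{E}^{(l)}$ and $\mathbb{E}^{(l)}\left[\pm\left(f\circ T^{p_l}-f\right)\right]=0$, sublinearity gives $\mathbb{E}^{(d)}\left[\pm\left(f\circ T^{p_l}-f\right)\right]\le 0$, hence both expressions vanish, so $p_l$ is a period of $\mathbb{E}^{(d)}$ and Lemma \ref{lemma-chara-period} yields $p_d\mid p_l$. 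Either repair is two lines, but as written your step 3) rests on a false identity.
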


\begin {proof}
1) Noting that $\left(\mathbb{E}^{(d)}\right)^{(d)}=\mathbb{E}^{(d)}$ by Corollary \ref{relation-periodicD}, it follows from Lemma \ref{lemma-chara-period} that $p_d\mid d$.

2) By Lemma \ref{lemma-chara-period}, we have $p_d=\inf\left\{l\in\mathbb{N}\mid \left(\mathbb{E}^{(d)}\right)^{(l)}=\mathbb{E}^{(d)}\right\}$. It follows form Proposition \ref {relation-periodicD} that
\[p_d=\inf\left\{l\in\mathbb{N}\mid \mathbb{E}^{\left(\mathfrak{gcd}(l,d)\right)}=\mathbb{E}^{(d)}\right\}=\inf\left\{l\in\mathbb{N}\mid \mathbb{E}^{(l)}=\mathbb{E}^{(d)}\right\}.\]

3) By 2), we get $\mathbb{E}^{(p_d)}=\mathbb{E}^{(d)}\le \mathbb{E}^{(l)}=\mathbb{E}^{(p_l)}$, i.e., $\Theta^{(p_d)}\subset\Theta^{(p_l)}$. It follows from Lemma \ref{relation-Theta} that $\Theta^{(p_d)}=\Theta^{\left(\mathfrak{gcd}\left(p_d, p_l\right)\right)}$, i.e., $\mathbb{E}^{(p_d)}=\mathbb{E}^{\left(\mathfrak{gcd}\left(p_d, p_l\right)\right)}$. By 2), we get $p_d=\mathfrak{gcd}\left(p_d, p_l\right)$, i.e., $p_d \mid p_l$.
\end {proof}
\section{Strong ergodicity of ISEs}

Let $(\Omega,\mathcal{F}, T)$ be a measurable system, and let $\mathbb{E}=\sup_{P\in\Theta}\mathrm{E}_P$ be a continuous $T$-invariant sublinear expectation on $(\Omega, \mathcal{B}_b(\Omega))$. Here, $\mathcal{B}_b(\Omega)$ is the collection of bounded $\mathcal{F}$-measurable functions on $\Omega$. Set 
\begin {align*}
\hat{\Theta}:=\left\{P\in\mathcal{M}(\Omega)\mid P[A]\le\mathbb{E}[1_{A}], \ A\in\mathcal{F}\right\}.
\end {align*}
Genenrally, we have $\Theta\subset\hat{\Theta}$, but $\mathbb{V}(A):=\sup_{P\in\Theta}P(A)=\sup_{P\in\hat{\Theta}}P(A)$, $A\in\mathcal{F}$, which is $T$-invariant. Write $\Theta(T)$ (resp., $\hat{\Theta}(T)$) the set of $T$-invariant probabilities in $\Theta$ (resp., $\hat{\Theta}$).

\begin {lemma} \label {invariant-Theta} Let $\mathbb{E}$ be a continuous $T$-invariant sublinear expectation on $(\Omega, \mathcal{B}_b(\Omega))$. For each $P\in\Theta$, there exists $P'\in\Theta(T)$ such that $P=P'$ on the $T$-invariant $\sigma$-algebra $\mathcal{I}$.
\end {lemma}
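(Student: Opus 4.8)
The plan is to use a Cesàro-averaging (or Banach-limit) construction to manufacture the $T$-invariant probability $P'$ from $P$. Given $P\in\Theta$, consider for each $n\in\mathbb{N}$ the averaged probability
\[
P_n:=\frac{1}{n}\sum_{k=0}^{n-1}P\circ T^{-k}.
\]
Since $\mathbb{E}$ is $T$-invariant and $P\le\mathbb{E}$ on $\mathcal{B}_b(\Omega)$, each $P\circ T^{-k}$ is dominated by $\mathbb{E}$, and hence by convexity so is each $P_n$; thus $P_n\in\Theta$ for every $n$. The $P_n$ are "asymptotically invariant" in the sense that $\|P_n\circ T^{-1}-P_n\|\le 2/n\to 0$ in total variation. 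The goal is to extract a genuine invariant limit point that still lies in $\Theta$ and agrees with $P$ on $\mathcal{I}$.

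For the existence of the limit I would invoke the continuity of $\mathbb{E}$: by the Remark following the definition of continuity, an upper probability that is continuous from above cannot charge infinitely many disjoint sets by a fixed $\varepsilon$, and one shows in the standard way that $\Theta$ (equivalently $\hat\Theta$, which has the same upper probability) is then uniformly countably additive / uniformly integrable, hence relatively compact in total variation, or at least sequentially compact in a topology fine enough to pass the invariance to the limit. Concretely I would argue: define $P'[A]:=\mathrm{LIM}_n\,P_n[A]$ using a Banach limit (or pass to a subsequence along which $P_n[A]$ converges for $A$ in a countable generating algebra and then extend). One checks $P'$ is finitely additive, and countable additivity is forced by the continuity of $\mathbb{V}$: if $A_m\downarrow\emptyset$ then $P_n[A_m]\le\mathbb{V}(A_m)\to 0$ uniformly in $n$, so $P'[A_m]\to 0$. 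Hence $P'\in\mathcal{M}(\Omega)$. Invariance $P'\circ T^{-1}=P'$ follows because $|P_n[T^{-1}A]-P_n[A]|\le 2/n$, which kills the difference in the limit. Domination $P'\le\mathbb{E}$ holds since each $P_n\le\mathbb{E}$ and $\mathbb{E}[f]$ is an upper bound preserved under (Banach-)limits of $\mathrm{E}_{P_n}[f]$ for $f\in\mathcal{B}_b(\Omega)$. Therefore $P'\in\Theta(T)$.

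Finally, $P=P'$ on $\mathcal{I}$: if $A\in\mathcal{I}$, then $T^{-1}A=A$, so $P\circ T^{-k}[A]=P[A]$ for all $k$, hence $P_n[A]=P[A]$ for every $n$, and thus $P'[A]=\mathrm{LIM}_n\,P_n[A]=P[A]$.

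The main obstacle is the extraction of a \emph{countably additive} invariant limit: a naive Banach limit of finitely additive measures need only be finitely additive, so the crux is to leverage continuity of $\mathbb{E}$ (via the Remark on disjoint sets) to get a uniform-countable-additivity / tightness-type bound on $\{P_n\}$ and thereby secure that the limit object is a true probability on $\mathcal{F}$. Once that compactness/uniform-integrability input is in hand, invariance and agreement on $\mathcal{I}$ are immediate from the averaging identity.
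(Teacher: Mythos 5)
Your proposal is correct and follows essentially the same route as the paper: Ces\`{a}ro averages $P_n=\frac{1}{n}\sum_{k=0}^{n-1}P\circ T^{-k}$, a Banach--Mazur limit to define $P'$, countable additivity secured by the uniform bound $P_n[A_m]\le\mathbb{V}(A_m)\downarrow 0$ from continuity, invariance from $\|P_n\circ T^{-1}-P_n\|\le 2/n$, and agreement on $\mathcal{I}$ since $P_n=P$ there. The step you flag as the ``main obstacle'' is in fact fully resolved by the uniform domination $P_n[A_m]\le\mathbb{V}(A_m)$ you already wrote down, so no additional compactness or tightness argument is needed.
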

It has been shown in \cite {ergodictheorem} that the same conclusion holds for $\hat{\Theta}$. The proof below is also adapted from the $\hat{\Theta}$ case.

\begin {proof}
For $P\in\Theta$, set $\mathrm{E}_{P_n}[f]=\frac{1}{n}\sum_{k=0}^{n-1}\mathrm{E}_P[f\circ T^k]$, $f\in\mathcal{B}_b(\Omega)$. Let $\Lambda$ be a Banach–Mazur limit, and set $\ell(f)=\Lambda\left(\left(\mathrm{E}_{P_n}[f]\right)\right)$, which is additive since $\Lambda$ is a linear functional. It follows from the positivity of $\Lambda$ that $\ell(f)\le\Lambda\left(\left(\mathbb{E}[f]\right)\right)=\mathbb{E}[f]$. The continuity of $\mathbb{E}$ implies that $P'(A):=\ell(1_A)$ is a probability, and it is easily to show that $\mathrm{E}_{P'}[f]=\ell(f)\le \mathbb{E}[f]$. So we conclude that $P'\in\Theta$. Noting that $P_n(A)=P(A)$ for any $A\in\mathcal{I}$ and $n\in\mathbb{N}$, we get $P'(A)=P(A)$ for each $A\in\mathcal{I}$. For any $A\in\mathcal{F}$, note  that $\limsup_{n}|P_n(A)-P_n(T^{-1}A)|=0$.  By Remark \ref {Rem-banachmazur}, we conclude that $P'$ is $T$-invariant.
\end {proof}

\begin {proposition} \label {structure-Theta} Let $\mathbb{E}$ be a continuous $T$-invariant sublinear expectation on $(\Omega, \mathcal{B}_b(\Omega))$. If $\mathbb{V}$ is $T$-ergodic, we have $\Theta(T)=\hat{\Theta}(T)$.
\end {proposition}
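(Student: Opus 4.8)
Since $\Theta\subseteq\hat{\Theta}$ the inclusion $\Theta(T)\subseteq\hat{\Theta}(T)$ is immediate, so the content is the reverse inclusion $\hat{\Theta}(T)\subseteq\Theta(T)$. The plan is in two steps: first handle the $T$-ergodic members of $\hat{\Theta}(T)$ by a direct estimate that uses only $P\le\mathbb{V}$ on sets together with Birkhoff's theorem; then reduce the general case to the ergodic one, which is where the hypothesis that $\mathbb{V}$ is $T$-ergodic enters.

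\emph{Step 1 (the ergodic case).} I claim that if $P\in\hat{\Theta}(T)$ is $T$-ergodic then $P\in\Theta$. Fix $f\in\mathcal{B}_b(\Omega)$, put $M=\sup|f|$, $a=\mathrm{E}_P[f]$, and $g_n=\frac1n\sum_{k=0}^{n-1}f\circ T^k$. Birkhoff's ergodic theorem for the invariant ergodic probability $P$ gives $g_n\to a$ $P$-a.s., so for $\varepsilon>0$ the sets $A_n=\{g_n>a-\varepsilon\}$ satisfy $P(A_n)\to1$, hence $\mathbb{V}(A_n)\ge P(A_n)\to1$, i.e.\ $\mathbb{V}(A_n)\to1$. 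Pointwise $g_n\ge(a-\varepsilon+M)\mathbbm{1}_{A_n}-M$, and (assuming the non-degenerate case $a-\varepsilon+M\ge0$; when $a=-M$ the claim is trivial since $\mathbb{E}[f]\ge-M$) monotonicity together with positive homogeneity and translation invariance of $\mathbb{E}$ give $\mathbb{E}[g_n]\ge(a-\varepsilon+M)\mathbb{V}(A_n)-M$; on the other hand subadditivity and $T$-invariance of $\mathbb{E}$ give $\mathbb{E}[g_n]\le\frac1n\sum_{k=0}^{n-1}\mathbb{E}[f\circ T^k]=\mathbb{E}[f]$. Hence $(a-\varepsilon+M)\mathbb{V}(A_n)\le\mathbb{E}[f]+M$; letting $n\to\infty$ and then $\varepsilon\downarrow0$ yields $a\le\mathbb{E}[f]$. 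Since $f\in\mathcal{B}_b(\Omega)$ was arbitrary, $P\in\Theta$, and being $T$-invariant, $P\in\Theta(T)$.

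\emph{Step 2 (reduction).} Because $\mathbb{V}=\sup_{P\in\hat{\Theta}}P(\cdot)$ is the common upper probability and is $T$-ergodic, the structure theorem of \cite{Sheng-Song24} (Theorem \ref{main result-SS} in its form for the capacity $\mathbb{V}$, i.e.\ for $\hat{\Theta}$) says $\hat{\Theta}(T)=\mathrm{co}\{P_1,\dots,P_m\}$ for a finite set of $T$-ergodic probabilities $P_j$. By Step 1 each $P_j\in\Theta$, and $\Theta$ is convex, so every $P\in\hat{\Theta}(T)$ lies in $\Theta$; being $T$-invariant it lies in $\Theta(T)$. This gives $\hat{\Theta}(T)\subseteq\Theta(T)$, hence $\Theta(T)=\hat{\Theta}(T)$.

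\emph{Main obstacle.} The estimate in Step 1 really needs $a=\mathrm{E}_P[f]$ to be $P$-a.s.\ constant; for a non-ergodic $P\in\hat{\Theta}(T)$ the Birkhoff limit $\mathrm{E}_P[f\mid\mathcal{I}]$ varies and the same computation stalls, since controlling $\mathbb{E}[\,\cdot\,\mathbbm{1}_{A_n}]$ in the limit would require $\mathbb{V}(A_n^{c})\to0$, which need not hold even though $P(A_n^{c})\to0$. So the genuine work is the reduction, and the only non-elementary ingredient is the ``convex hull of finitely many ergodic probabilities'' description of $\hat{\Theta}(T)$; the one point to state carefully is that this description is invoked for $\hat{\Theta}$ (probabilities dominated by the capacity $\mathbb{V}$), not for $\Theta$ itself.
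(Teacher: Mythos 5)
Your proof is correct, and the key step is genuinely different from the paper's. Both arguments start from the same place: the hypothesis that $\mathbb{V}$ is $T$-ergodic is used only to invoke Theorem \ref{main result-SS} for $\hat{\Theta}$, giving $\hat{\Theta}(T)=\mathrm{co}\{P_1,\dots,P_m\}$ with each $P_j$ a $T$-ergodic probability, after which everything reduces to showing $P_j\in\Theta$. Where you differ is in how that last point is established. The paper argues indirectly: it takes the invariant partition $\{E_k\}\subset\mathcal{I}$ supporting the $P_k$, uses Lemma \ref{invariant-Theta} (the Banach--Mazur construction) to get $\sup_{P\in\Theta(T)}P(E_k)=\mathbb{V}(E_k)=1$, and then uses closedness of the coefficient set $K$ in the simplex to see that this supremum is attained, forcing $P_k\in\Theta(T)$. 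You instead prove directly that any $T$-ergodic $P\in\hat{\Theta}(T)$ satisfies $\mathrm{E}_P[f]\le\mathbb{E}[f]$ for every $f\in\mathcal{B}_b(\Omega)$, via the Birkhoff limit $g_n\to \mathrm{E}_P[f]$, the lower bound $g_n\ge(a-\varepsilon+M)1_{A_n}-M$, and the two-sided estimate $(a-\varepsilon+M)\mathbb{V}(A_n)-M\le\mathbb{E}[g_n]\le\mathbb{E}[f]$; this computation is sound (including your handling of the degenerate case $a-\varepsilon+M<0$ and your correct observation that it collapses for non-ergodic $P$ because the Birkhoff limit is no longer constant). What each approach buys: yours is self-contained at this step --- it needs neither Lemma \ref{invariant-Theta} nor the compactness/attainment argument, and it isolates cleanly the general fact that a $T$-ergodic probability dominated by $\mathbb{V}$ on sets is automatically dominated by $\mathbb{E}$ on all bounded functions; the paper's version only needs to control $\mathbb{V}$ on the finitely many invariant sets $E_k$ and leans on machinery it has already built. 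Your closing remark that the structure theorem must be applied to $\hat{\Theta}$ rather than $\Theta$ is exactly the point the paper's proof also relies on.
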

\begin {proof}Since $\Theta\subset\hat{\Theta}$, we get $\Theta(T)\subset\hat{\Theta}(T)$. So, we only need to prove $\Theta(T)\supset\hat{\Theta}(T)$. 
Theroem 3.3 in \cite {Sheng-Song24} shows that $\hat{\Theta}(T)$ is the convex hull of a finite set of $T$-ergodic probabilities, i.e., $\hat{\Theta}(T)=\mathrm{co}\{P_1, \cdots, P_m\}$.  Set \[K=\left\{\alpha \in \mathbb{R}^m \mid \alpha_i\ge0, \sum_{i=1}^m\alpha_i=1, \sum_{i=1}^m\alpha_i P_i\in \Theta(T)\right\},\] which is closed and
\[\left\{P_{\alpha}=\sum_{i=1}^m\alpha_i P_i\mid \alpha\in K\right\}=\Theta(T).\]
Let $\{E_k\}_{k=1}^m\subset \mathcal{I}$ be a partition of $\Omega$ such that $P_i(E_i)=1$, $i=1, \cdots, m.$ By Lemma \ref {invariant-Theta},  for each $k=1, \cdots, m$, we have $1=\mathbb{V}(E_k)=\sup_{P\in\Theta(T)}P(E_k)$. Therefore, there $\alpha^k\in K$ such that $P_{\alpha^k}(E_k)=1$. Noting that $1=P_{\alpha^k}(E_k)=\sum_{i=1}^m\alpha^k_iP_i(E_k)=\alpha^k_k$, we have $P_{\alpha^k}=P_k\in\Theta(T)$ for each $k=1, \cdots, m$. Thus, we get
$\Theta(T)=\hat{\Theta}(T).$
\end {proof}

\begin {definition} For a continuous $T$-invariant sublinear expectation $\mathbb{E}$ with finite period $p_{\mathbb{E}}$ on $(\Omega, \mathcal{B}_b(\Omega))$, we say $\mathbb{E}$ (or $\mathbb{V}$) is strongly $T$-ergodic, if $\mathbb{V}(A)=0,$ or 1, for $A\in\mathcal{I}_{p_{\mathbb{E}}}$. 
\end {definition}

\begin {proposition}\label {chara-strongErgodicity}
Let $\mathbb{E}$ be  a continuous $T$-invariant sublinear expectation  with finite period $p_{\mathbb{E}}$ on $(\Omega, \mathcal{B}_b(\Omega))$. Then $\mathbb{E}$  is strongly $T$-ergodic if and only if, for each $d\in\mathbb{N}$, $\mathbb{E}^{(d)}$ is strongly $T$-ergodic. In this case, $\Theta^{(d)}$ is the convex hull of a finite set of $T^{p_d}$-ergodic probabilities.
\end {proposition}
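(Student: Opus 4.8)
The plan is to push everything down to the system $(\Omega,\mathcal F,T^{p_d})$ and apply the structure theorem, Theorem \ref{main result-SS}, there. I would first record three facts about $\mathbb E^{(d)}$. (i) Since $\mathbb E\big[\sum_{k=0}^{n-1}f\circ T^{kd}\big]\le n\mathbb E[f]$ for $f\ge 0$, we have $\mathbb E^{(d)}\le\mathbb E$, hence $\Theta^{(d)}\subset\Theta$ and $\mathbb V^{(d)}(\cdot):=\sup_{P\in\Theta^{(d)}}P(\cdot)\le\mathbb V$; thus $\mathbb V^{(d)}$ is continuous from above and, by Proposition \ref{perioD}, $\mathbb E^{(d)}$ is again a continuous $T$-invariant sublinear expectation on $(\Omega,\mathcal B_b(\Omega))$, with $\mathbb E^{(d)}=\sup_{P\in\Theta^{(d)}}\mathrm E_P$ and $\mathbb E^{(d)}[1_A]=\mathbb V^{(d)}(A)$. (ii) From $\mathbb E[h\circ T^{p_{\mathbb E}}-h]=0$ for all $h\in\mathcal B_b(\Omega)$, taking $h=\sum_{k=0}^{n-1}f\circ T^{kd}$ (so that $\sum_{k=0}^{n-1}(f\circ T^{p_{\mathbb E}}-f)\circ T^{kd}=h\circ T^{p_{\mathbb E}}-h$) and dividing by $n$ gives $\mathbb E^{(d)}[f\circ T^{p_{\mathbb E}}-f]=0$ for all $f$; by Lemma \ref{lemma-chara-period} applied to $\mathbb E^{(d)}$ this forces $p_d\mid p_{\mathbb E}$, so $p_d<\infty$ and $\mathcal I_{p_d}\subset\mathcal I_{p_{\mathbb E}}$. (iii) Since $p_d\mid d$ by Theorem \ref{Thm-chara-PeriodicD}(1), every $A\in\mathcal I_{p_d}$ satisfies $1_A\circ T^{kd}=1_A$, whence $\mathbb E^{(d)}[1_A]=\lim_n\frac1n\mathbb E[n\,1_A]=\mathbb E[1_A]$, i.e.\ $\mathbb V^{(d)}(A)=\mathbb V(A)$ for every $A\in\mathcal I_{p_d}$.

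With (i)--(iii) the equivalence is immediate. If every $\mathbb E^{(d)}$ is strongly $T$-ergodic, then in particular $\mathbb E=\mathbb E^{(p_{\mathbb E})}$ (Lemma \ref{lemma-chara-period}) is. Conversely, if $\mathbb E$ is strongly $T$-ergodic and $A\in\mathcal I_{p_d}$, then $A\in\mathcal I_{p_{\mathbb E}}$ by (ii), so $\mathbb V^{(d)}(A)=\mathbb V(A)\in\{0,1\}$ by (iii); as $\mathbb E^{(d)}$ has the finite period $p_d$, this says precisely that $\mathbb E^{(d)}$ is strongly $T$-ergodic.

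For the final assertion, suppose $\mathbb E$, hence each $\mathbb E^{(d)}$, is strongly $T$-ergodic, and view $\mathbb E^{(d)}=\sup_{P\in\Theta^{(d)}}\mathrm E_P$ as a continuous sublinear expectation on $(\Omega,\mathcal B_b(\Omega))$ invariant under $S:=T^{p_d}$. Its upper probability $\mathbb V^{(d)}$ is $S$-invariant and, being $0$ or $1$ on the $S$-invariant $\sigma$-algebra $\mathcal I_{p_d}$, is $S$-ergodic; moreover $\Theta^{(d)}=\Theta^{(p_d)}$ consists entirely of $T^{p_d}$-invariant probabilities by Proposition \ref{perioD}(2). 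Applying Theorem \ref{main result-SS} to the system $(\Omega,\mathcal F,S)$ and the family $\Theta^{(d)}$ then gives that the set of $T^{p_d}$-ergodic probabilities in $\Theta^{(d)}$ is finite and that the set of $T^{p_d}$-invariant probabilities in $\Theta^{(d)}$, which is all of $\Theta^{(d)}$, is its convex hull. This is the claim.

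The step I expect to require the most care is the period arithmetic behind (ii) and (iii): the divisibilities $p_d\mid p_{\mathbb E}$ and $p_d\mid d$ are the crux of the argument, since they are what allow strong ergodicity to be transferred between $\mathbb E$ and its components, and they rest on the results of Section 3 (Lemma \ref{lemma-chara-period}, Theorem \ref{Thm-chara-PeriodicD}). Two minor technical points would also need checking: first, the representation $\mathbb E^{(d)}=\sup_{P\in\Theta^{(d)}}\mathrm E_P$, with $\mathbb E^{(d)}[1_A]=\mathbb V^{(d)}(A)$, which follows from the continuity from above of $\mathbb E^{(d)}$ on $\mathcal B_b(\Omega)$ (or can be bypassed by producing, via Lemma \ref{invariant-Theta} applied to $T^{p_d}$, a $T^{p_d}$-invariant $P'\in\Theta^{(d)}$ agreeing with a given $P\in\Theta$ on $\mathcal I_{p_d}$); and second, that Theorem \ref{main result-SS} may be applied with $\Theta^{(d)}$ in place of the a priori larger $\hat\Theta^{(d)}$, which is justified by Proposition \ref{structure-Theta} applied to $\mathbb E^{(d)}$ and $T^{p_d}$.
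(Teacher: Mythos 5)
Your proof is correct and follows essentially the same route as the paper: establish $p_d\mid p_{\mathbb{E}}$, transfer the $0$--$1$ law on $\mathcal{I}_{p_d}$ from $\mathbb{V}$ to $\mathbb{V}^{(d)}$ via Lemma \ref{invariant-Theta}/Proposition \ref{structure-Theta}, and invoke Theorem \ref{main result-SS} for the system $T^{p_d}$. The only (harmless) deviations are that you obtain $p_d\mid p_{\mathbb{E}}$ by a direct telescoping argument plus Lemma \ref{lemma-chara-period} rather than through the gcd arithmetic of Lemma \ref{relation-Theta} and Theorem \ref{Thm-chara-PeriodicD}, and that you prove the $0$--$1$ property of $\mathbb{V}^{(d)}$ directly instead of deducing it from the convex-hull structure.
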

\begin {proof}
The sufficiency is obvious since $\mathbb{E}=\mathbb{E}^{(p_{\mathbb{E}})}$ by Lemma \ref {lemma-chara-period}.

For the necessity, it follows from Lemma \ref {relation-Theta} that $\mathbb{E}^{(d)}=\mathbb{E}^{(\mathfrak{gcd}(d, p_{\mathbb{E}}))}$ since $\mathbb{E}^{(d)}\le\mathbb{E}=\mathbb{E}^{(p_{\mathbb{E}})}$. By Theorem \ref {Thm-chara-PeriodicD}, we have $p_d\mid\mathfrak{gcd}(d, p_{\mathbb{E}})$, and consenquently, $p_d\mid p_{\mathbb{E}}$. So the strong $T$-ergodicity of $\mathbb{E}$ implies that $\mathbb{E}$ is $T^{p_d}$-ergodic. Then, Theorem \ref {main result-SS} and Proposition \ref{structure-Theta} shows that $\Theta(T^{p_d})$ is the convex hull of a finite set of $T^{p_d}$-ergodic probabilities. Noting that $\Theta^{(d)}=\Theta^{(p_d)}=\Theta(T^{p_d})$, we conclude that $\mathbb{V}^{(d)}=\mathbb{V}^{(p_d)}:=\sup_{P\in\Theta^{(p_d)}}P$ is trivial on $\mathcal{I}_{p_d}$, i.e., $\mathbb{E}^{(d)}$ is strongly ergodic.
\end {proof}

Motivated by Proposition \ref {chara-strongErgodicity}, we give the definition of strong ergodicity for genenral $T$-invariant sublinear expectation $\mathbb{E}$ on $(\Omega, \mathcal{B}_b(\Omega))$.

\begin {definition} For a continuous $T$-invariant sublinear expectation $\mathbb{E}$ on $(\Omega, \mathcal{B}_b(\Omega))$, we say $\mathbb{E}$ (or $\mathbb{V}$) is strongly $T$-ergodic, if for each $d\in\mathbb{N}$, $\mathbb{E}^{(d)}$ is strongly ergodic. 
\end {definition}
\begin {theorem}\label {strongErgor-conti} Let $\mathbb{E}$ be a continuous sublinear expectation on $(\Omega, \mathcal{B}_b(\Omega))$ and assume that $\mathbb{E}$ is strongly $T$-ergodic.  Then, there exists $d\in\mathbb{N}$ such that for any $l\in\mathbb{N}$, we have $\Theta^{(l)}\subset\Theta^{(d)}$.
\end {theorem}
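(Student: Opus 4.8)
The plan is to argue by contradiction: assume that for every $d\in\mathbb N$ there is an $l$ with $\Theta^{(l)}\not\subseteq\Theta^{(d)}$, and from this produce infinitely many pairwise disjoint sets of capacity $1$, contradicting the continuity of $\mathbb V$ (Remark~\ref{continuity of capacity}). First I would reduce to a strictly increasing chain. Writing $\mathcal P=\{p_d:d\in\mathbb N\}$ for the set of periods, Lemma~\ref{lemma-chara-period} and Theorem~\ref{Thm-chara-PeriodicD} give $\Theta^{(d)}=\Theta^{(p_d)}$ for all $d$ and $p_p=p$ for $p\in\mathcal P$, and combining Proposition~\ref{perioD}(3) with Lemma~\ref{relation-Theta} one sees that for $p,q\in\mathcal P$ one has $\Theta^{(p)}\subseteq\Theta^{(q)}$ iff $p\mid q$ and that $(\mathcal P,\mid)$ is directed. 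In a directed poset a maximal element is the maximum, and a maximum $p^{\ast}$ would make $d=p^{\ast}$ work; so under our assumption $\mathcal P$ has no maximal element, and one can build $q_1\mid q_2\mid\cdots$ in $\mathcal P$ with the $q_k$ pairwise distinct, whence $\Theta^{(q_1)}\subsetneq\Theta^{(q_2)}\subsetneq\cdots$ (the inclusions are strict since $\Theta^{(q_k)}=\Theta^{(q_{k+1})}$ would force $\mathbb E^{(q_k)}=\mathbb E^{(q_{k+1})}$ and hence $q_k=p_{q_k}=p_{q_{k+1}}=q_{k+1}$).

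Next I would attach to this chain a refining sequence of finite partitions. Because $q_k\in\mathcal P$, the ISE $\mathbb E^{(q_k)}$ is continuous (its capacity is $\le\mathbb V$), has period exactly $q_k$, and is strongly $T$-ergodic, so Proposition~\ref{chara-strongErgodicity} applies to it and shows that $\Theta^{(q_k)}$ is the convex hull of the finite set of $T^{q_k}$-ergodic probabilities in $\Theta$. Let $\mathcal F^{(k)}$ be the family of supports of these extreme points. Since distinct ergodic probabilities are mutually singular, $\mathcal F^{(k)}$ is, modulo $\mathbb V$-null sets (harmless, $\mathbb V$ being countably subadditive as a supremum of probabilities; it covers $\Omega$ up to a $\mathbb V$-null set by Lemma~\ref{invariant-Theta} applied with $T$ replaced by $T^{q_k}$), a finite partition of $\Omega$ each of whose blocks $F$ carries a member of $\Theta$ and hence has $\mathbb V(F)=1$. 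The elementary fact I would use is that if $\nu$ is $T^{q_{k+1}}$-ergodic then, with $m=q_{k+1}/q_k$, the average $\frac{1}{m}\sum_{j=0}^{m-1}\nu\circ T^{-q_k j}$ is $T^{q_k}$-invariant, dominated by $\mathbb E$, and in fact $T^{q_k}$-ergodic, hence an extreme point of $\Theta^{(q_k)}$; since $\nu$ is absolutely continuous with respect to this average, every block of $\mathcal F^{(k+1)}$ lies inside a block of $\mathcal F^{(k)}$, i.e.\ $\mathcal F^{(k+1)}$ refines $\mathcal F^{(k)}$.

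I would then rule out stabilization of the partitions. If $\mathcal F^{(k)}$ were eventually constant, equal to $\mathcal F$ for $k\ge k_0$, then for each block $F$ the unique $T^{q_{k_0}}$-ergodic probability in $\Theta$ supported on $F$ is, because $q_{k_0}\mid q_k$, also $T^{q_k}$-invariant and still supported on $F$, hence it equals the extreme point of $\Theta^{(q_k)}$ supported on $F$; taking convex hulls gives $\Theta^{(q_k)}=\Theta^{(q_{k_0})}$ for all $k\ge k_0$, contradicting the strictness established in Step~1. So $\mathcal F^{(k)}$ refines strictly infinitely often and $|\mathcal F^{(k)}|\to\infty$; from a refining sequence of finite partitions into capacity-$1$ blocks whose cardinalities tend to infinity I would extract, by a König-type recursion (at each stage pass to a block whose refinement count stays infinite and set aside a disjoint sibling block), infinitely many pairwise disjoint sets of capacity $1$, which contradicts Remark~\ref{continuity of capacity}.

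The step I expect to be the main obstacle is the non-stabilization argument: it is exactly there that the divisibility $q_{k_0}\mid q_k$ — equivalently $p_{q_{k_0}}\mid p_{q_k}$ by Theorem~\ref{Thm-chara-PeriodicD}(3) — has to be combined with the finite-period description of $\Theta^{(q_k)}$ coming from Proposition~\ref{chara-strongErgodicity} to prevent the extreme points of $\Theta^{(q_k)}$ from drifting while the supporting partition stays fixed; without this, a strictly growing chain could a priori hide inside a bounded partition and the continuity of $\mathbb V$ would not be violated.
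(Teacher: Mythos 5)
Your proposal is correct and follows essentially the same route as the paper: reduce the negation to a strictly increasing divisibility chain $\Theta^{(q_1)}\subsetneq\Theta^{(q_2)}\subsetneq\cdots$, use Proposition \ref{chara-strongErgodicity} to write each $\Theta^{(q_k)}$ as the convex hull of finitely many mutually singular ergodic probabilities, observe that strict inclusion forces the associated finite partitions (equivalently, the cardinalities of the extreme-point sets) to grow without bound, and contradict continuity via Remark \ref{continuity of capacity}. Your presentation is somewhat more explicit than the paper's at two points it leaves implicit — verifying the refinement via the $T^{q_k}$-averaged measure rather than the convex decomposition of extreme points, and the final K\H{o}nig-type extraction of infinitely many pairwise disjoint capacity-one sets — but the underlying argument is the same.
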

\begin {proof}
Since $\mathbb{E}^{(k)}$, $k\in\mathbb{N}$, is strongly $T$-ergodic, Proposition \ref {chara-strongErgodicity} shows that $\Theta^{(k)}$ is the convex hull of a finite set of $T^{p_k}$-ergodic probabilities.

For $d, l\in\mathbb{N}$ with $d\mid l$, it follows from Proposition \ref {perioD} that $\Theta^{(d)}\subset \Theta^{(l)}$. So, the set of extreme points $\mathrm{ext}\ \Theta^{(d)}\subset \Theta^{(l)}=\mathrm{co}(\mathrm{ext}\ \Theta^{(l)})$. Write $\mathrm{ext}\ \Theta^{(d)}=\{P_j\mid j=1, \cdots, n_d\}$,  and  $\mathrm{ext}\ \Theta^{(l)}=\{Q_i\mid i=1, \cdots, n_l\}$. Here $n_k$, $k=d, l$, denotes the cardinality of $\mathrm{ext}\ \Theta^{(k)}$.  Then, for each $1\le j\le n_d$, there $\alpha^j\in\mathbb{R}^{n_l}_+$ with $\sum_{i=1}^{n_l}\alpha^j_i=1$ such that $P_j=\sum_{i=1}^{n_l}\alpha^j_iQ_i$. Since the probabilities in $\mathrm{ext}\ \Theta^{(k)}$, $k=d, l$, are mutually singular, the sets $A_j=\{i\mid \alpha^j_i>0, 1\le i\le n_l\}$, $j=1, \cdots, n_d$, are disjoint from each other but nonempty. Thus, we have $\mathrm{card}\left(\mathrm{ext}\ \Theta^{(d)}\right)\le\mathrm{card}\left(\mathrm{ext}\ \Theta^{(l)}\right)$, and the equality holds if and only if $\mathrm{ext}\ \Theta^{(d)}=\mathrm{ext} \ \Theta^{(l)}$, which is also equivalent to $\Theta^{(d)}=\Theta^{(l)}$.

If the conclusion of the theorem is wrong, it follows from Proposition \ref {perioD} that there will be a sequence $\{n_k\}$ with $n_k \mid n_{k+1}$ such that
\[\Theta^{(n_k)}\subset \Theta^{(n_{k+1})}, \quad \textmd{but} \ \Theta^{(n_k)}\neq \Theta^{(n_{k+1})}.\]
So, by the above arguments, we have $\mathrm{card}\left(\mathrm{ext} \ \Theta^{(n_k)}\right)<\mathrm{card}\left(\mathrm{ext} \ \Theta^{(n_{k+1})}\right)$, for all $k\in\mathbb{N}$, which is a contradiction by Remark \ref{continuity of capacity} since $\mathbb{E}$ is continuous.
\end {proof}

\begin {definition} For a $T$-invariant sublinear expectation $\mathbb{E}$ on $(\Omega, \mathcal{H})$, we say $\mathbb{E}$ has a periodic decomposition if \begin {align}\mathbb{E}[f]=\sup_{d\in\mathbb{N}}\mathbb{E}^{(d)}[f], f\in\mathcal{H}.
\end {align}
\end {definition}
Some examples of ISEs having a periodic decomposition are provided in the next section.

\begin {corollary} Let $\mathbb{E}$ be a continuous sublinear expectation on $\left(\Omega, \mathcal{B}_b\left(\Omega\right)\right)$ and assume that $\mathbb{E}$ is strongly $T$-ergodic.  Then, $\mathbb{E}$ has a finite period $p_{E}$ if it has a periodic decomposition. In this case, for each $P\in\Theta$ and $f\in\mathcal{B}_b\left(\Omega\right)$, we have
\begin {align} \label {ergodicity-1}
  -\mathbb{E}[-f]\leq\lim_{n\to\infty}\frac{1}{n}\sum_{k=0}^{n-1}f\circ T^{kp_{\mathbb{E}}}\leq \mathbb{E}[f], \quad P\mbox{-a.s.},
\end {align}
and for each $f\in\mathcal{B}_b\left(\Omega\right)$ there exists a probablity $P_f\in\Theta$, such that
\begin {align} \label {ergodicity-2}
 \lim_{n\to\infty}\frac{1}{n}\sum_{k=0}^{n-1}f\circ T^{kp_{\mathbb{E}}}=\mathbb{E}[f], \quad P_f\mbox{-a.s.}
\end {align}
\end {corollary}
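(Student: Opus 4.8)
The plan has three stages: first obtain the finite period, then describe $\Theta^{(p_{\mathbb{E}})}$ as a convex combination of $T^{p_{\mathbb{E}}}$-ergodic measures, and finally run the classical Birkhoff theorem along the iterates of $S:=T^{p_{\mathbb{E}}}$. For the finite period, Theorem \ref{strongErgor-conti} provides a single $d\in\mathbb{N}$ with $\Theta^{(l)}\subset\Theta^{(d)}$, hence $\mathbb{E}^{(l)}\le\mathbb{E}^{(d)}$, for every $l\in\mathbb{N}$. If $\mathbb{E}$ has a periodic decomposition, then $\mathbb{E}[f]=\sup_{l}\mathbb{E}^{(l)}[f]\le\mathbb{E}^{(d)}[f]$, while always $\mathbb{E}^{(d)}\le\mathbb{E}$ (since $\Theta^{(d)}\subset\Theta$ by 2) of Proposition \ref{perioD}); so $\mathbb{E}=\mathbb{E}^{(d)}$, which is periodic by 1) of Proposition \ref{perioD}. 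Thus $p_{\mathbb{E}}<\infty$; Lemma \ref{lemma-chara-period} then gives $\mathbb{E}=\mathbb{E}^{(p_{\mathbb{E}})}$, and 2) of Theorem \ref{Thm-chara-PeriodicD} shows the period of $\mathbb{E}^{(p_{\mathbb{E}})}$ is exactly $p_{\mathbb{E}}$.

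Next, since $\mathbb{E}$ is strongly $T$-ergodic, $\mathbb{E}=\mathbb{E}^{(p_{\mathbb{E}})}$ is strongly ergodic, so Proposition \ref{chara-strongErgodicity} applied with $d=p_{\mathbb{E}}$ (using the previous step, so that the relevant period is $p_{\mathbb{E}}$) yields $\Theta^{(p_{\mathbb{E}})}=\mathrm{co}\{P_1,\dots,P_m\}$ with each $P_i$ being $S$-ergodic, $S:=T^{p_{\mathbb{E}}}$. By 2) of Proposition \ref{perioD}, $\Theta^{(p_{\mathbb{E}})}$ is exactly the set of $S$-invariant members of $\Theta$; and since $P\mapsto\mathrm{E}_P[f]$ is affine on this polytope, $\mathbb{E}[f]=\sup_{P\in\Theta^{(p_{\mathbb{E}})}}\mathrm{E}_P[f]=\max_{1\le i\le m}\mathrm{E}_{P_i}[f]$ for each $f\in\mathcal{B}_b(\Omega)$. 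Mutual singularity of the $P_i$ provides a partition $\{E_1,\dots,E_m\}\subset\mathcal{I}_{p_{\mathbb{E}}}$ of $\Omega$ with $P_i(E_i)=1$.

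Now fix $f\in\mathcal{B}_b(\Omega)$. For (\ref{ergodicity-2}), choose $i_f$ with $\mathrm{E}_{P_{i_f}}[f]=\mathbb{E}[f]$ and set $P_f:=P_{i_f}\in\Theta^{(p_{\mathbb{E}})}\subset\Theta$; Birkhoff's theorem for $S$ under the $S$-ergodic measure $P_f$ gives $\frac1n\sum_{k=0}^{n-1}f\circ T^{kp_{\mathbb{E}}}\to\mathrm{E}_{P_f}[f]=\mathbb{E}[f]$, $P_f$-a.s. For (\ref{ergodicity-1}), fix $P\in\Theta$; continuity of $\mathbb{E}$ lets us apply Lemma \ref{invariant-Theta} to the ($S$-invariant) continuous sublinear expectation $\mathbb{E}$ on $(\Omega,\mathcal{F},S)$, producing $P'\in\Theta(S)=\Theta^{(p_{\mathbb{E}})}$ with $P=P'$ on $\mathcal{I}_{p_{\mathbb{E}}}$. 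Writing $P'=\sum_i\alpha_iP_i$ and applying Birkhoff under each $P_i$, we get, $P'$-a.s., $\frac1n\sum_{k=0}^{n-1}f\circ S^k\to\sum_i\mathrm{E}_{P_i}[f]\,1_{E_i}$, whose values lie in $[\min_i\mathrm{E}_{P_i}[f],\max_i\mathrm{E}_{P_i}[f]]=[-\mathbb{E}[-f],\mathbb{E}[f]]$. Since $\limsup_n$ and $\liminf_n$ of these averages are $S$-invariant, the event ``the averages converge with limit in that interval'' belongs to $\mathcal{I}_{p_{\mathbb{E}}}$; it has $P'$-probability $1$, hence $P$-probability $1$, which is exactly (\ref{ergodicity-1}). (Alternatively, (\ref{ergodicity-1}) follows at once by applying the improved ergodic theorem of \cite{Sheng-Song24} to the ergodic system $(\Omega,\mathcal{F},S)$.)

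The main obstacle is not analytic but organizational: one must be sure that $\mathbb{E}^{(p_{\mathbb{E}})}=\mathbb{E}$ retains period exactly $p_{\mathbb{E}}$, so that Proposition \ref{chara-strongErgodicity} yields genuinely $T^{p_{\mathbb{E}}}$-ergodic extreme points rather than $T^{p'}$-ergodic ones for a proper divisor $p'$, and one must transfer the $P'$-a.s.\ Birkhoff conclusion to the given $P\in\Theta$ through the $\mathcal{I}_{p_{\mathbb{E}}}$-measurability of the convergence event. The only substantive external input, Theorem \ref{strongErgor-conti}, is already in hand, so beyond these identifications the argument is routine.
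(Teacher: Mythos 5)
Your proposal is correct and follows essentially the same route as the paper: Theorem \ref{strongErgor-conti} plus the periodic decomposition gives $\mathbb{E}=\mathbb{E}^{(d)}$ and hence a finite period, Proposition \ref{chara-strongErgodicity} gives $\Theta^{(p_{\mathbb{E}})}=\mathrm{co}\{P_1,\dots,P_m\}$ with $T^{p_{\mathbb{E}}}$-ergodic extreme points, and Birkhoff's theorem applied to a maximizing $P_{i_f}$ yields (\ref{ergodicity-2}). The only difference is that you prove (\ref{ergodicity-1}) directly (via Lemma \ref{invariant-Theta} for $S=T^{p_{\mathbb{E}}}$ and the $\mathcal{I}_{p_{\mathbb{E}}}$-measurability of the convergence event) where the paper simply cites Theorem 3.4 of \cite{Sheng-Song24}, an alternative you also mention.
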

\begin {proof}It follows from Theorem \ref {strongErgor-conti} that there exists $d\in\mathbb{N}$ such that for any $l\in\mathbb{N}$, we have $\mathbb{E}^{\left(l\right)}\le\mathbb{E}^{\left(d\right)}$.  Thus, the assumption that $\mathbb{E}$ has a periodic decomposition implies $\mathbb{E}=\mathbb{E}^{\left(d\right)}$, which has a finite   
period denoted by $p_{\mathbb{E}}$. By the strong ergodicity of $\mathbb{E}$, we conclude that $\mathbb{E}$ is $T^{p_{\mathbb{E}}}$-ergodic. Then, $\left(\ref {ergodicity-1}\right)$ follows from Theorem 3.4 in \cite {Sheng-Song24}.

By Proposition \ref {chara-strongErgodicity}, $\Theta^{d}$ is the convex hull of a finite set of $T^{p_{\mathbb{E}}}$-ergodic.  So, for any $f\in\mathcal{B}_b\left(\Omega\right)$, there exists a $T^{p_{\mathbb{E}}}$-ergodic probability $P\in\Theta^{\left(d\right)}$ such that $\mathrm{E}_{P_f}[f]=\mathbb{E}^{d}[f]=\mathbb{E}[f]$. Then, Birkhoff’s
ergodic theorem gives  (\ref {ergodicity-2}).
\end {proof}

\section{Examples}

We first give two typical examples of ISEs which have a periodic decomposition.
\begin {example} For $\Omega=\mathbb{R}^{\infty}$ endowed with the metric $\rho(x,y)=\sum_{k\in\mathbb{N}}\frac{1}{2^n}\left(\left|x_k-y_k\right|\land 1\right)$, let $\mathbb{E}$ be a regular sublinear expectation on $\left(\Omega, C_b(\Omega)\right)$ such that $\xi_n\left(\omega\right)=\omega_n$, $n\in\mathbb{N}$, are independent and identically distributed. 

1) $\mathbb{E}$ has a periodic decomposition with respect to the shift
\[\theta\left(x_1, x_2, \cdots\right)=\left(x_2, x_3, \cdots\right).\]

2) For $P\in\mathrm{ext} \ \Theta^{(d)}$, $d\in\mathbb{N}$, $P$ is $\theta^d$-ergodic.

3) If $\mathbb{E}$ is not linear, the period $p_d$ of $\mathbb{E}^{(d)}$ is euqal to $d$.
\end {example}
\begin {proof}
1) For $f(\omega)=\phi(\omega_1, \cdots, \omega_d)$ with $\phi\in C_{b, \mathrm{Lip}}(\mathbb{R}^d)$, we get that
\begin {eqnarray*}\mathbb{E}^{(d)}[f]=\lim_n\frac{1}{n}\mathbb{E}\left[\sum_{k=0}^{n-1}\phi(\xi_{kd+1}, \cdots, \xi_{(k+1)d})\right]=\mathbb{E}[f]
\end {eqnarray*} since $\{\xi_n\}_{n\ge1}$ are independent and identically distributed. So, we get $\mathbb{E}[f]=\sup_{d\in\mathbb{N}}\mathbb{E}^{(d)}[f]$, which holds for all $f\in C_b(\Omega)$ by approximation arguments.

2)  It follows from Proposition \ref {perioD}  that probabilities in $\Theta^{(d)}$ are $\theta^d$-invairant. For $P\in\mathrm{ext} \ \Theta^{(d)}$, by Birkhoff’s ergodic theorem, for any $f\in C_b(\Omega)$,
\[\lim_{n\to\infty}\frac{1}{n}\sum_{k=0}^{n-1}f\circ \theta^{kd}=\mathrm{E}_{P}[f\mid \mathcal{I}_d].\]

We claim that for $f\in C_b(\Omega)$, 
\begin {align}\label {IID-theta}\mathrm{E}_{P}[f\mid \mathcal{I}_d]\le \mathbb{E}[f], \quad P\mathrm{-a.s.}
\end {align}
If $P$ is not $\theta^d$-ergodic, there will be $A\in\mathcal{I}_d$ such that $0<P(A)<1$. Then $P_A(\cdot)=P(\cdot\mid A)$ and $P_{A^c}(\cdot)=P(\cdot\mid A^c)$ are both $\theta^d$-invairant since $A\in \mathcal{I}_d$. (\ref {IID-theta}) shows that $P_A, P_{A^c}$ are dominated by $\mathbb{E}$, which implies that $P_A, P_{A^c}$ belongs to $\Theta^{(d)}$ by Proposition \ref {perioD}. This is a contradiction since $P=P(A)P_A+P(A^c)P_{A^c}$,

We now prove (\ref {IID-theta}). Actually, fix $f(\omega)=\phi(\omega_1, \cdots, \omega_{ld})$ with $\phi\in C_{b,\mathrm{Lip}}(\mathbb{R}^{ld})$, $l\in\mathbb{N}$. For any $n\in\mathbb{N}$, writing $n=ml+r$ with $m, r\in\mathbb{Z}_+$, $0\le r <l$, we have
\[S_{n}^{(d)}f:=\sum_{k=0}^{n-1}f\circ\theta^{kd}=\sum_{i=0}^{l-1}\sum_{k=0}^{m-1}f\circ\theta^{(kl+i)d}+\sum_{j=0}^{r-1}f\circ\theta^{(ml+j)d}.\] 
Thus,
\[\frac{1}{n}S_{n}^{(d)}f=\frac{ml}{n}\left(\frac{1}{l}\sum_{i=0}^{l-1}\left(\frac{1}{m}\sum_{k=0}^{m-1}f\circ\theta^{(kl+i)d}\right)\right)+\frac{r}{n}\left(\frac{1}{r}\sum_{j=0}^{r-1}f\circ\theta^{(ml+j)d}\right),\]
and consequently,
\[\left(\frac{1}{n}S^{(d)}_{n}f-\mathbb{E}[f]\right)^+\le \frac{m}{n}  \sum_{i=0}^{l-1}  \left(\frac{1}{m}\sum_{k=0}^{m-1}f\circ\theta^{(kl+i)d}-\mathbb{E}[f]\right)^++ \frac{2r}{n}\|f\|_{\infty}.\]
It follows from the weak law of large numbers under sublinear expectations that
\[\lim_{n\to\infty} \mathbb{E}\left[\left(\frac{1}{n}S^{(d)}_{n}f-\mathbb{E}[f]\right)^+\right]=0\] 
since $(f\circ\theta^{(kl+i)d})_{k\ge0}$ is independent and identically distributed under $\mathbb{E}$ for each $0\le i<l$. For the case $f\in C_b(\Omega)$, we can get the desired result by approximation arguments. Thus,

\[\mathrm{E}_P\left[\left(\mathrm{E}_{P}[f\mid \mathcal{I}_d]-\mathbb{E}[f]\right)^+\right]\le\mathbb{E}\left[\liminf_{n\to\infty}\left(\frac{1}{n}S^{(d)}_{n}f-\mathbb{E}[f]\right)^+\right]\le \lim_{n\to\infty} \mathbb{E}\left[\left(\frac{1}{n}S^{(d)}_{n}f-\mathbb{E}[f]\right)^+\right]=0.\]

3) By Theorem \ref {Thm-chara-PeriodicD},  it suffices to prove that for any $l\mid d$, $l<d$ there exists $f\in C_b(\Omega)$ such that $\mathbb{E}^{(d)}[f-f\circ \theta^{l}]\neq0.$ 

Actually, there exists $\phi\in C_b(\mathbb{R})$ such that $\overline{\mu}=\mathbb{E}[\phi(\xi_1)]>-\mathbb{E}[-\phi(\xi_1)]=\underline{\mu}$. Set $f=\sum_{k=1}^{l} \phi(\xi_k)$. Then, since $2l\le d$,  we have
\[\mathbb{E}^{(d)}[f-f\circ\theta^{l}]= \mathbb{E}[f-f\circ\theta^{l}]=l(\overline{\mu}-\underline{\mu})>0.\]
\end {proof}

\begin {example} \label {period-count}
Let $\left(\Omega,\mathcal{F}, T\right)$ be a measurable system, and let $\mathbb{E}$ be a continuous $T$-invariant sublinear expectation on $\left(\Omega, \mathcal{B}_b\left(\Omega\right)\right)$. If $\Omega$ is countable, $\mathbb{E}$ has a periodic decomposition.
\end {example}
\begin {proof}
Let $\Theta=\left\{P\in\mathcal{M}\left(\Omega\right)\mid \mathrm{E}_P[f]\le\mathbb{E}[f], \ f\in\mathcal{B}_b\left(\Omega\right)\right\}$. It follows from Lemma \ref{invariant-Theta} that for any $P\in \Theta$, there is a $T$-invariant probability $P_0\in\Theta$ such that $P=P_0$ on $\mathcal{I}$. 

We shall further prove that $P\ll P_0$.  Otherwise, there is $N\in \mathcal{F}$ such that $P_0\left(N\right)=0$, but $P\left(N\right)>0.$ Note that 
\[A_n:=\cap_{k=n}^{\infty}T^{-k} N^c \uparrow \cup_{n=1}^{\infty}A_n=:A, \quad T^{-1}A_n=A_{n+1}.\]
So $A$ is $T$-invariant,  and thus we get $P\left(A\right)=P_0\left(A\right)=1.$ Noting that $A\cap A_n^c \downarrow \emptyset$, but
\[\mathbb{V}\left(A\cap A_n^c\right)=\mathbb{V}\left(T^{-n}A\cap T^{-n}A_0^c\right)=\mathbb{V}\left(A\cap A_0^c\right)\ge P\left(A\cap A_0^c\right)=P\left(A_0^c\right)\ge P\left(N\right)>0,\] which is a contradition.

Let $\Omega_0:=\{x\in \Omega \mid P_0\left(\{x\}\right)>0\}$.  Set $\Omega_{0,d}:=\{x\in \Omega_0\mid T^dx=x\}$, which is $T$-invariant ($P_0$-a.s.). Furthermore, any subset of $\Omega_{0,d}$ is $T^d$-invariant.  By Poincar$\acute{\textmd{e}}$'s reccurrence theorem, any $x\in \Omega_0$ is periodic, i.e., $T^nx=x$, for some $n\in\mathbb{N}$.  Then, for any $\varepsilon>0,$ there exists $d_{\varepsilon}\in\mathbb{N}$ such that $P_0\left(\Omega_{0,d_{\varepsilon}}\right)>1-\varepsilon.$

 Set $P^{\varepsilon}_n=\frac{1}{n}\sum_{k=0}^{n-1}P\circ T^{-kd_{\varepsilon}}$.
Then, for $A\in\mathcal{F}$, 
\begin {align*}
P^{\varepsilon}_n\left(A\right)=&P^{\varepsilon}_n\left(A\cap\Omega_{0, d_{\varepsilon}}\right)+P^{\varepsilon}_n\left(A\cap\Omega_{0, d_{\varepsilon}}^c\right)\\
=&P\left(A\cap\Omega_{0, d_{\varepsilon}}\right)+P^{\varepsilon}_n\left(A\cap\Omega_{0, d_{\varepsilon}}^c\right)\\
=&P\left(A\right)-P\left(A\cap\Omega_{0, d_{\varepsilon}}^c\right)+P^{\varepsilon}_n\left(A\cap\Omega_{0, d_{\varepsilon}}^c\right),
\end {align*}
where the second equality holds since $A\cap\Omega_{0, d_{\varepsilon}}$ is $T^{d_{\varepsilon}}$-invariant. Note that \[\left|P\left(A\cap\Omega_{0, d_{\varepsilon}}^c\right)-P^{\varepsilon}_n\left(A\cap\Omega_{0, d_{\varepsilon}}^c\right)\right|\le \max\{P\left(\Omega_{0, d_{\varepsilon}}^c\right),P^{\varepsilon}_n\left(\Omega_{0, d_{\varepsilon}}^c\right)\}= P_0\left(\Omega_{0, d_{\varepsilon}}^c\right)< \varepsilon.\]
The last equality holds since $\Omega_{0, d_{\varepsilon}}^c$ is $T$-invariant and $P=P_0$ on $\mathcal{I}$. Thus, 
\[\left|P^{\varepsilon}_n(A)-P(A)\right|<\varepsilon, \quad \emph{for} \ A\in\mathcal{F}.\]

By similar arguments as those in the proof to Lemma \ref {invariant-Theta},  we can get  a $T^{d_{\varepsilon}}$-invariant probability $P^{\varepsilon}\in\Theta$ as a Banach–Mazur limit of $P_n^{\varepsilon}$. Therefore, 
\[\left\|P^{\varepsilon}-P\right\|_{\mathrm{TV}}\le\varepsilon.\]
Thus, for $f\in\mathcal{B}_b(\Omega)$,
\[\mathbb{E}^{(d_{\varepsilon})}[f]\ge \mathrm{E}_{P^{\varepsilon}}[f]\ge \mathrm{E}_P[f]-2\varepsilon\|f\|_{\infty}.\]
So, for any $f\in\mathcal{B}_b(\Omega)$ and $P\in\Theta$,\[\sup_{d\in\mathbb{N}}\mathbb{E}^{(d)}[f]\ge \mathrm{E}_P[f],\]
which means $\mathbb{E}$ has a periodic decomposition.
\end {proof}

Below is an example of ISE which does not have a periodic decomposition.

\begin {example}
Let $K=\{z\in\mathbb{C}: |z|=1\}$, let $\mathcal{B}$ be the $\sigma$-algebra of Borel subsets of $K$ and let $m$ be Haar measure. Let $a\in K$ and define $T: K\rightarrow K$ by $Tz=az$. Then $T$ is measure-preserving since $m$ is Haar measure. $T$ is (uniquely) ergodic iff $a$ is not a root of unitary,  which is the only case we consider unless specified otherwise.

\vskip 0.1 cm

 Let $\phi$ be a nonnegative $\mathcal{B}$-measurable fucntion on $K$ such  that $m(\phi)=1$. Set $\phi_k(x)=\phi(a^{k}x)$ and $\mu_k=\phi_k.m$, $k\in\mathbb{Z}$, which are all probabilities since $m$ is $T$-invariant. Since $\mu_k\circ T^{-1}=\mu_{k-1}$, $\mathbb{E}[f]=\sup_{k\in \mathbb{Z}}\mathrm{E}_{\mu_k}[f]$, $f\in\mathcal{B}_b(\Omega)$, is $T$-invariant.  

Let $\Theta$ be the set of probabilities dominated by $\mathbb{E}$. Since $m$ is the unique $T$-invariant probability, it follows from Lemma \ref {invariant-Theta} that $m\in\Theta$ and $\mu_k=m$ on $\mathcal{I}$, $k\in\mathbb{Z}$, and thus $\mathbb{V}=\sup_{\mu\in\Theta}\mu$ is trivial on $\mathcal{I}$. Here, we give a direct proof by Birkhoff’s ergodic theorem. 

In fact, \[\mathrm{E}_m[f]=\lim_{n\to\infty}\mathrm{E}_m\left[f\frac{1}{n}\sum_{k=0}^{n-1}\phi\circ T^k\right]=\lim_{n\to\infty}\frac{1}{n}\sum_{k=0}^{n-1}\mathrm{E}_{\mu_k}[f]\le\mathbb{E}[f],\] which means $m\in\Theta.$ For $A\in\mathcal{I}$, $k\in\mathbb{Z}$, 
\[\mu_k(A)=\lim_{n\to\infty}\mathrm{E}_m\left[\phi\circ T^k\frac{1}{n}\sum_{i=0}^{n-1}1_A\circ T^{i}\right]=\lim_{n\to\infty}\mathrm{E}_m\left[\frac{1}{n}\sum_{i=0}^{n-1}\phi\circ T^{k-i}1_A\right]=m(A).\]

For $A\in\mathcal{B}$, $\mathbb{V}(A)=\sup_{k\in\mathbb{Z}}\mathrm{E}_m\left[\phi\circ T^k 1_A\right]=\sup_{k\in\mathbb{Z}}\mathrm{E}_m\left[\phi1_{T^kA}\right]$. Since $m(T^kA)=m(A)$, for any $\varepsilon>0$, there exists $\delta>0$ such that for any $A\in\mathcal{B}$ with $m(A)<\delta$, we have $\mathbb{V}(A)<\varepsilon$. So $\mathbb{V}$ is continuous.

\vskip 0.3 cm

If $a$ is not a root of unitary, neither is $a^d$, for any $d\in\mathbb{N}$. So, $m$ is the unique $T^d$-invariant measure on $K$,  i.e., $\mathbb{E}^{(d)}=\mathrm{E}_m$, and thus $\mathbb{E}$ does not have a periodic decomposition.

\end {example}

\section*{Acknowledgements}
 Song Y. is financially supported by National Key R\&D Program of China (No. 2020YFA0712700 \& No. 2018YFA0703901).

\section{Appendix}

Banach-Mazur limits (see \emph{Infinite Dimensional Analysis} by Aliprantis and Border, 2005 \cite{infiniteanalysis}) play an important role in the study of invariant capacities. Therefore, we shall give a brief introduction to it.

\begin{definition}\label{banachmazur}
    Let $\ell_{\infty}$ be the space of all bounded sequences. A positive linear functional $\Lambda: \ell_{\infty} \rightarrow \mathbb{R}$ is a Banach-Mazur limit if
    \begin{enumerate}
        \item $\quad \Lambda(\boldsymbol{e})=1$, where $\boldsymbol{e}=(1,1,1, \ldots)$,
        \item $\quad \Lambda\left(x_1, x_2, \ldots\right)=\Lambda\left(x_2, x_3, \ldots\right)$ for each $\left(x_1, x_2, \ldots\right) \in \ell_{\infty}$.
    \end{enumerate}
\end{definition}

\begin{lemma}
    If $\Lambda$ is a Banach-Mazur limit, then
$$
\liminf _{n \rightarrow \infty} x_n \leqslant \Lambda(x) \leqslant \limsup _{n \rightarrow \infty} x_n
$$
for each $x=\left(x_1, x_2, \ldots\right) \in \ell_{\infty}$. In particular, $\Lambda(x)=\lim\limits_{n \rightarrow \infty} x_n$ for each convergent sequence $x$ (so every Banach-Mazur limit is an extension of the limit functional).
\end{lemma}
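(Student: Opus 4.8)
The plan is to deduce everything from three elementary consequences of the defining properties of $\Lambda$: positivity, linearity, and the normalization $\Lambda(\boldsymbol{e})=1$, where $\boldsymbol{e}=(1,1,\dots)$. First I would record the monotonicity of $\Lambda$: if $x,y\in\ell_\infty$ satisfy $x_n\le y_n$ for every $n$, then $y-x$ is a nonnegative sequence, so positivity gives $\Lambda(y-x)\ge 0$ and linearity yields $\Lambda(x)\le\Lambda(y)$. Combining this with $\Lambda(\boldsymbol{e})=1$ shows that whenever $a\le x_n\le b$ for all $n$ one has $a\le\Lambda(x)\le b$; in particular $\Lambda$ is order-bounded and $|\Lambda(x)|\le\sup_n|x_n|$.

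Next I would use the shift-invariance (property (2) in Definition \ref{banachmazur}) to pass to tails. Fix $x\in\ell_\infty$ and put $L=\limsup_{n\to\infty}x_n$. Given $\varepsilon>0$, choose $N$ with $x_n\le L+\varepsilon$ for all $n\ge N$, and let $\sigma$ denote the shift $(x_1,x_2,\dots)\mapsto(x_2,x_3,\dots)$. Iterating property (2) gives $\Lambda(x)=\Lambda(\sigma^{N-1}x)$, and the $k$-th coordinate of $\sigma^{N-1}x$ equals $x_{N+k-1}$, which is $\le L+\varepsilon$ for every $k\ge 1$. By the comparison obtained in the previous step, $\Lambda(x)=\Lambda(\sigma^{N-1}x)\le L+\varepsilon$, and letting $\varepsilon\downarrow 0$ yields $\Lambda(x)\le\limsup_n x_n$.

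The lower bound then comes for free by applying the upper bound to $-x$: $\Lambda(x)=-\Lambda(-x)\ge-\limsup_n(-x_n)=\liminf_n x_n$. Finally, if $x$ converges to $\ell$, then $\liminf_n x_n=\limsup_n x_n=\ell$, and the two inequalities squeeze $\Lambda(x)=\ell=\lim_n x_n$, which is the last assertion of the lemma.

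I do not expect any genuine obstacle: the result is classical (see Aliprantis and Border). The only point that warrants a moment's care is the bookkeeping in the shift step — choosing $\sigma^{N-1}$ rather than $\sigma^{N}$ or $\sigma^{N+1}$ so that the surviving coordinates are exactly the indices $n\ge N$ on which the estimate $x_n\le L+\varepsilon$ is available. Everything else is immediate from positivity, linearity, and normalization.
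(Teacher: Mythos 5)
Your proof is correct and complete: monotonicity from positivity plus linearity, the comparison with constant sequences via $\Lambda(\boldsymbol{e})=1$, the passage to tails by iterating shift-invariance, and the reflection $x\mapsto -x$ for the lower bound together give exactly the classical argument. The paper itself states this lemma without proof, deferring to Aliprantis and Border, and your argument is the standard one found there, with the index bookkeeping in the shift step handled correctly.
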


\begin {remark} \label {Rem-banachmazur}For any $x, y\in \ell_{\infty}$,
\[|\Lambda(x)-\Lambda(y)|\le\lim_n\sup_{k\ge n} |x_k-y_k|.\]
\end {remark}

\begin{theorem}
    Banach–Mazur limits exist.
\end{theorem}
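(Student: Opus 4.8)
The plan is to realize $\Lambda$ as a Hahn--Banach extension dominated by a carefully chosen sublinear functional on $\ell_\infty$. Let $S\colon\ell_\infty\to\ell_\infty$ denote the shift $S(x_1,x_2,\ldots)=(x_2,x_3,\ldots)$ and set
\[
q(x)=\limsup_{n\to\infty}\frac1n\sum_{k=1}^{n}x_k,\qquad x=(x_1,x_2,\ldots)\in\ell_\infty .
\]
First I would record the elementary properties of $q$: it is finite-valued (indeed $|q(x)|\le\|x\|_\infty$), positively homogeneous, and subadditive, the last because $\frac1n\sum_{k=1}^{n}(x_k+y_k)$ splits as a sum of partial averages and $\limsup$ is subadditive. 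The two facts that drive the argument are, first, that $q$ restricts to the ordinary limit on the space of convergent sequences (averaging preserves limits) and, second, that
\[
q(Sx-x)=q(x-Sx)=0\qquad\text{for every }x\in\ell_\infty ,
\]
since $\frac1n\sum_{k=1}^{n}\bigl((Sx)_k-x_k\bigr)=\frac{x_{n+1}-x_1}{n}\to 0$ by boundedness of $x$.

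Next I would apply the Hahn--Banach theorem in its sublinear-domination form to the zero functional on the trivial subspace $\{0\}$ (which is dominated by $q$ there, since $q(0)=0$), obtaining a linear functional $\Lambda\colon\ell_\infty\to\mathbb{R}$ with $\Lambda(x)\le q(x)$ for all $x$. The three defining properties then follow. Normalization: $\Lambda(\boldsymbol{e})\le q(\boldsymbol{e})=1$ while $-\Lambda(\boldsymbol{e})=\Lambda(-\boldsymbol{e})\le q(-\boldsymbol{e})=-1$, hence $\Lambda(\boldsymbol{e})=1$. Positivity: if $x\ge 0$ then $\Lambda(-x)\le q(-x)=\limsup_{n}\frac1n\sum_{k=1}^{n}(-x_k)\le 0$, so $\Lambda(x)\ge 0$. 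Shift invariance: applying $\Lambda\le q$ to both $Sx-x$ and $x-Sx$ gives $\Lambda(Sx-x)\le 0$ and $\Lambda(x-Sx)\le 0$, whence $\Lambda(Sx-x)=0$, i.e.\ $\Lambda(Sx)=\Lambda(x)$. Finally, the chain $\liminf_{n}x_n\le -q(-x)\le\Lambda(x)\le q(x)\le\limsup_{n}x_n$ shows $\Lambda$ extends the limit functional and re-derives the inequality stated in the preceding lemma.

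I do not anticipate a real obstacle: the whole content is the correct choice of the dominating functional $q$ together with the observation that $q$ annihilates the coboundaries $Sx-x$, after which Hahn--Banach does the work. An alternative of comparable length would invoke the Markov--Kakutani fixed point theorem applied to the single affine, weak-$*$ continuous map on the weak-$*$ compact convex set of means on $\ell_\infty$ induced by $S$; I would nonetheless prefer the Hahn--Banach argument above, as it is entirely self-contained.
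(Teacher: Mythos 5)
Your argument is correct and complete: dominating the zero functional by the sublinear functional $q(x)=\limsup_n\frac1n\sum_{k=1}^n x_k$, extending by Hahn--Banach, and reading off normalization, positivity, and shift invariance from $\Lambda\le q$ is exactly the standard construction, and it is the one found in the reference (Aliprantis--Border) that the paper cites in lieu of giving a proof. Your closing chain $\liminf_n x_n\le -q(-x)\le\Lambda(x)\le q(x)\le\limsup_n x_n$ also proves the preceding lemma of the appendix at no extra cost, so nothing further is needed.
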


\renewcommand{\refname}{\large References}{\normalsize \ }

\end{document}